\documentclass[12pt,reqno]{amsart}
\usepackage{amssymb,amsmath,amsthm,amsfonts,fancyhdr}
\usepackage{cases}
\usepackage{tikz}
\usepackage{indentfirst}
\usepackage{mathrsfs}
\usepackage{setspace}
\usepackage{color}
\usepackage{cite}
\usepackage{bm}
\usepackage{geometry}
\usepackage{enumitem}
\numberwithin{equation}{section}

\newtheorem{thm}{Theorem}[section]
\newtheorem{defn}[thm]{Defnition}
\newtheorem{lemma}[thm]{Lemma}
\newtheorem{cor}[thm]{Corollary}
\newtheorem{re}[thm]{Remark}

\renewcommand{\det}{\mbox{det}}

  \numberwithin{equation}{section}
  \numberwithin{figure}{section}

\begin{document}
\title[Liouville type theorems]{Liouville type theorems for fully nonlinear elliptic equations in exterior domains in half spaces}
\author{Dongsheng Li}
\author{Rulin Liu}
\address{School of Mathematics and Statistics, Xi'an Jiaotong University, Xi'an, P.R.China 710049.}
\address{School of Mathematics and Statistics, Xi'an Jiaotong University, Xi'an, P.R.China 710049.}
\email{lidsh@mail.xjtu.edu.cn}
\email{lrl001@stu.xjtu.edu.cn}
\begin{abstract}
  We establish a Liouville type theorem for fully nonlinear uniformly elliptic equations in exterior domains in half spaces under quadratic boundary data and a quadratic growth condition, that is, any viscosity solution tends to a quadratic polynomial plus lower order terms at infinity with the rate at least $|x|^{-\alpha-n}$ for any $\alpha\in(0,1)$.
  As applications, this result can lead to Liouville type theorems for Monge-Amp\`{e}re equations, $k$-Hessian equations and special Lagrangian equations with critical and supercritical phases.
\end{abstract}
\keywords{Liouville Type Theorems, Asymptotic Behavior, Viscosity Solutions, Fully Nonlinear Elliptic Equations, Half Spaces.}
\footnotetext{This research is supported by NSFC 12471202.}
\maketitle
\section{Introduction}

The classical Bernstein theorem for Monge-Amp\`{e}re equation $$\det{D^2u}=1$$ in $\mathbb{R}^n$,
which was proved by J\"{o}rgens \cite{JO}\ ($n=2$), Calabi\cite{CA}\ ($n\leq5$) and Pogolove\cite{PO}\ ($n\geq 2$), asserts that its classical convex solution must be a quadratic polynomial\ (without convexity hypothesis for $n=2$).
This result was extended to exterior domains in $\mathbb{R}^n$ by Ferrer, Mart\'{i}nez and Mil\'{a}n\cite{F-F-M} for $n=2$ and by Caffarelli and Li\cite{C-LI} for all $n\geq 2$, that is, any convex viscosity solution is asymptotic to a quadratic polynomial at infinity.
Recently, Han and Wang \cite{H-W} proved that the remainders in the asymptotic expansions can be characterized by a single function near the origin.
When the domain is half space $\mathbb{R}^n_+$, Savin\cite{SA2} proved a Liouville theorem for degenerate solutions to Monge–Ampere equations. Jia, the first author and Li\cite{JIA-LI-LI} showed that in exterior domains in $\mathbb{R}^n_+$, if the convex solution satisfies a quadratic boundary data and a quadratic growth condition, then it tends to a quadratic polynomial at infinity with rate at least $x_n/(|x|^n)$.

For another class of very important fully nonlinear elliptic equations, $k$-Hessian ($1< k<n$) equations $$\sigma_k\left(\lambda\left(D^2u(x)\right)\right)=1,$$ there are several Liouville type theorems. When $k=2$ ( i.e. quadratic Hessian equation ), Chang and Yuan \cite{CH-Y} proved that any global smooth convex solution in $\mathbb{R}^n$ must be quadratic if $D^2u(x)\geq\left(\delta-\sqrt{\frac 2{n(n-1)}}\right)I$ for any $\delta>0$. Later Shankar and Yuan \cite{Sh-Y1} improved this result to semiconvex solutions with $D^2u(x)>-KI$ for a large $K>0$.
Under quadratic growth assumption, Chen and Xiang \cite{CH-X} obtained that any entire 2-convex solution is a quadratic polynomial if $\sigma_3(\lambda)\geq -A$ for $A\geq 0$ (redundant for $n=3$).
For general $k$, Bao, Chen, Guan and Ji \cite{B-C-G-J} proved that a strictly convex smooth entire solution satisfying quadratic growth condition must be a quadratic polynomial.
The convexity of solutions in \cite{B-C-G-J} was relaxed to $(k+1)$-convexity by Li, Ren and Wang \cite{LI-R-W}. In \cite{Du}, Du proved three necessary and sufficient conditions for the condition of quadratic growth, which implies Liouville property to $k$-Hessian equations.
Recently, Chu and Dinew \cite{CH-D} established the Liouville theorem for a general class of Hessian equations and generalized the results of \cite{CH-X} and \cite{LI-R-W}.

There are also a few results for $k$-Hessian equations in half space $\mathbb{R}^n_+$. Zhou \cite{ZH} obtained the Liouville theorem for $k=2$ with convexity and quadratic growth hypotheses of solutions. Newly, Jia and Ma \cite{JIA-MA} proved that for all $1<k<n$ any $k$-convex solution in $\mathbb{R}^n_+$ with an appropriate boundary condition must be quadratic polynomial, provided a quadratic growth condition and a weak $(k+1)$-convexity.

We also consider special Lagrangian equation $$\sum\limits_{i=1}^n\arctan{\lambda_i\left(D^2u(x)\right)}=\Theta.$$ Its smooth convex entire solution must be quadratic was proved by Yuan \cite{Y1}.
Later, Yuan \cite{Y2} showed that any smooth entire solution with supercritical phases also must be quadratic.
This result was extended to exterior domain by the first author, Li and Yuan \cite{LI-LI-Y}, namely, for supercritical phases, a smooth solution in exterior domain in $\mathbb{R}^n$ is asymptotic to a quadratic polynomial for $n\geq 3$, with extra logarithmic term for $n=2$.
Recently, Han and Marchenko \cite{H-M} characterized the remainders in the asymptotic expansions by a single function near the origin.

Now let us focus on the general fully nonlinear elliptic equation
\begin{equation}\label{eq:F(D^2u)}
 F(D^2u(x))=0.
\end{equation}
The first author, Li and Yuan\cite{LI-LI-Y} established an exterior Liouville type theorem for uniformly elliptic equation (\ref{eq:F(D^2u)}) for dimension $n\geq 3$, which states that its smooth solution in $\mathbb{R}^n\setminus\overline{B_1}$ tends to a quadratic polynomial as $|x|\to\infty$ if $F$ is convex (or concave or the level set of $F$ is convex) and the $L^{\infty}$ norm of $D^2u$ is bounded.
Later, with bounded Hessian assumption but without concavity of $F$, the authors \cite{LI-LIU} proved that when dimension is $2$, the viscosity solution to (\ref{eq:F(D^2u)}) in exterior domains in $\mathbb{R}^2$ tends to a quadratic polynomial plus a logarithmic term and $x/(|x|^2)$ term as $|x|\to\infty$.
For half space $\mathbb{R}^n_+$, under some strong growth condition on solutions, the first author and Liang\cite{LI-LIANG} established a Liouville type theorem for equation (\ref{eq:F(D^2u)}) in exterior domains in $\mathbb{R}^n_+$.

In this paper, we establish a second order Liouville type theorem for general fully nonlinear uniformly elliptic equation (\ref{eq:F(D^2u)}) in exterior domains in half space $\mathbb{R}^n_+$ with the assumption that $u$ satisfies quadratic boundary data and quadratic growth condition, which can be regarded as the counterpart of the results in \cite{LI-LI-Y} and \cite{LI-LIU}.
Throughout this paper, we say a Liouville type theorem for some equation is $k$-th ($k\geq 0$) order if its solutions grow at most as $k$-th order polynomials at infinity. Our main theorem goes as following.
\begin{thm}\label{thm:main}
 Let $u\in C\left(\overline{\mathbb{R}^n_+}\setminus\overline{B_1}^+\right)$ be a viscosity solution to
 \begin{equation*}
  \begin{cases}
   F\left(D^2u(x)\right)=0\ &\text{in}\ \mathbb{R}^n_+\setminus\overline{B_1}^+,\\
   u(x)=p(x)\               &\text{on}\ \partial\mathbb{R}^n_+,
  \end{cases}
 \end{equation*}
 where $F\in C^{1,1}$ is a fully nonlinear uniformly elliptic operator with ellipticity constants $\lambda$ and $\Lambda$, $p$ is a quadratic polynomial.
 Assume that $u$ satisfies the quadratic growth condition, i.e.
 \begin{equation}\label{cond:qua growth of F}
  \left|u(x)\right|\leq M\left(1+|x|^2\right)
 \end{equation} in $\overline{\mathbb{R}^n_+}\setminus\overline{B_1}^+$ for some positive constant $M$.
 Then there exists a unique $A\in \mathcal{S}^n$, $b\in \mathbb{R}^n, c, d\in\mathbb{R}$ and some matrix $C\in \mathbb{R}^{n\times n}$ such that for any $\alpha\in(0,1)$,
 $$u(x)=\frac 12 x^{\mathrm{T}}Ax+b\cdot x+c+d\frac{x_n}{\left|x^{\mathrm{T}}Qx\right|^{\frac n2}}+\frac 12 \frac{x^{\mathrm{T}}Cx}{\left|x^{\mathrm{T}}Qx\right|^{\frac{n+2}2}}+O\left(|x|^{-\alpha-n}\right)\ \text{as}\ |x|\to\infty$$
 and $$\left.\left(\frac 12 x^{\mathrm{T}}Ax+b\cdot x+c\right)\right|_{\partial\mathbb{R}_+^n}=p(x),\ \left.\left(\frac 12 \frac{x^{\mathrm{T}}Cx}{\left|x^{\mathrm{T}}Qx\right|^{\frac{n+2}2}}+O\left(|x|^{-\alpha-n}\right)\right)\right|_{\partial\mathbb{R}_+^n}=0$$ if either $n\geq 3$ with assumption that $F$ is concave (or convex) or $n=2$, where $Q=\left[F_{ij}(A)\right]^{-1}$.

 Furthermore, if $F$ is smooth, then
 $$\left|D^k\left(u(x)-\frac 12 x^{\mathrm{T}}Ax-b\cdot x-c-d\frac{x_n}{\big|x^{\mathrm{T}}Qx\big|^{\frac n2}}-\frac 12 \frac{x^{\mathrm{T}}Cx}{\left|x^{\mathrm{T}}Qx\right|^{\frac{n+2}2}}\right)\right|=O\left(|x|^{-\alpha-n-k}\right)\ \text{as}\ |x|\to\infty$$
 for all $k\in\mathbb{N}$.
 \end{thm}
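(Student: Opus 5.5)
We plan to argue in three stages: first obtain $C^{1,1}$-type control and convergence of the Hessian at infinity by blow-down; then find the quadratic part $\frac12x^{\mathrm T}Ax+b\cdot x+c$; finally expand the remainder, which solves a linear equation with coefficients converging to $F_{ij}(A)$, in terms of the homogeneous solutions of the constant coefficient problem in the half space.

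\textbf{Step 1 (Hessian bounds and blow-down).} For $R\ge 2$ set $u_R(y)=R^{-2}u(Ry)$ on $\{1/2<|y|<4\}\cap\overline{\mathbb{R}^n_+}$. By (\ref{cond:qua growth of F}), $|u_R|\le 2M$. The boundary data $R^{-2}p(Ry)$ is a quadratic polynomial with bounded coefficients. Since $F\in C^{1,1}$, we apply boundary $C^{2,\alpha}$ estimates for viscosity solutions with flat boundary and quadratic data. These need concavity of $F$ (Evans--Krylov and Krylov's boundary estimate) when $n\ge3$, and hold for any uniformly elliptic $F$ when $n=2$ (Nirenberg). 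They give $\|u_R\|_{C^{2,\alpha}}\le C$ on $\{1\le|y|\le 2\}^+$. Scaling back, $|D^2u|\le C$ and $[D^2u]_{C^\alpha(B_{2R}\setminus B_R)}\le CR^{-\alpha}$. Any blow-down limit $u_\infty$ is then a viscosity solution of $F(D^2u_\infty)=0$ in $\mathbb{R}^n_+\setminus\{0\}$ with $|u_\infty|\le M|y|^2$, bounded Hessian, and boundary data equal to the quadratic part of $p$.

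\textbf{Step 2 (the quadratic part).} We first remove the origin singularity: a bounded-Hessian solution extends across a point, so $u_\infty$ solves the equation in the whole half space. By the half-space Liouville theorem for $C^{1,1}$ solutions with quadratic data (proved, e.g., via difference quotients in tangential directions and a boundary Harnack / $C^{1,\alpha}$ argument), $u_\infty$ is a quadratic polynomial. To show the limit $A$ is unique, independent of the subsequence, we compare scales $R$ and $2R$ using the improvement-of-flatness iteration:
\[
\|D^2u-A_k\|_{L^\infty(B_{2^{k+1}}\setminus B_{2^k})}\le C2^{-k\beta}.
\]
This gives $D^2u\to A$ with a rate. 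Setting $w=u-\frac12x^{\mathrm T}Ax-b\cdot x-c$, where $b,c$ are chosen so that $w$ is compatible with $p$ on $\partial\mathbb{R}^n_+$, we get $|w|=O(|x|^{2-\beta})$.

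\textbf{Step 3 (linearization and expansion).} Since $F(A)=0$, $w$ solves $a_{ij}(x)D_{ij}w=0$ with $a_{ij}=\int_0^1F_{ij}(A+tD^2w)\,dt$. Because $F\in C^{1,1}$, we have $|a_{ij}-F_{ij}(A)|\le C|D^2w|$. After the linear change $x\mapsto Q^{-1/2}x$ the limiting operator becomes the Laplacian. The transformed domain is again a half space, which is why $x_n$ and $x^{\mathrm T}Qx$ appear. The boundary data of $w$ is zero or affine on $\partial\mathbb{R}^n_+$, and we subtract the linear part into $b$. We then bootstrap. Write $\Delta_Q w=f:=(F_{ij}(A)-a_{ij})D_{ij}w$, so $|f|\le C|D^2w|^2$. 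The decay of $w$ improves via barriers and the Kelvin transform of the half space: the harmonic functions in an exterior half space vanishing on the boundary and decaying are combinations of odd reflections of derivatives of the fundamental solution. The leading ones are $x_n|x|^{-n}$ (degree $1-n$) and then degree $-n$ terms of the form $x^{\mathrm T}Cx\,|x|^{-n-2}$, up to constants, odd in $x_n$. Iterating decay estimates $|w|\le C|x|^{-\gamma}$ with weighted Schauder estimates, $f$ decays like $|x|^{-2\gamma-4}$. This is integrable enough to solve $\Delta_Q v=f$ with $v=O(|x|^{-n-\alpha})$ once $\gamma>n/2-1$, which may require several iterations passing through intermediate non-integer exponents. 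Then $w-v$ is $Q$-harmonic, and we expand it, which yields $d$ and $C$; the boundary identities follow because each term vanishes or is compensated on $\{x_n=0\}$. Derivative estimates for smooth $F$ follow by rescaled interior and boundary Schauder estimates on dyadic annuli.

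\textbf{Main obstacle.} We expect the main difficulty in Step 3 to be the iteration across the range $2-\beta$ down to $-n-\alpha$. At exponents equal to integers, resonance with homogeneous harmonic solutions must be avoided, for instance by perturbing exponents slightly. The quadratic nonlinearity only closes once the decay is already below $|x|^{0}$, so the first steps must use Harnack-type decay in the half space rather than the linear expansion. Concavity is needed precisely to have the $C^{2,\alpha}$ estimates that make the coefficients H\"older and $|D^2w|$ decay.
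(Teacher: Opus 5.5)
\textbf{The gap is in your Step 2, at the point that carries the theorem: that $D^2u$ has a single limit $A$ at infinity, with a rate.}

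Blow-down compactness plus the half-space Liouville theorem only shows one thing. Each subsequential limit of $R^{-2}u(Ry)$ is $\frac12 y^{\mathrm T}A'y$, where $A'$ has the prescribed tangential block and $F(A')=0$. The set of such $A'$ obtained as $R\to\infty$ is compact and connected. Nothing in your argument excludes a slow drift of the free entries $A'_{kn}$, $k\le n-1$, and hence of $A'_{nn}$.

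The ``improvement-of-flatness iteration comparing scales $R$ and $2R$'' cannot rule this out as stated. Flatness improvement transfers information from a large ball to smaller concentric ones. Here you need to go outward, and the behaviour of $u$ on $B_{2R}\setminus B_R$ is not controlled by its behaviour on $B_R\setminus B_{R/2}$. An inward iteration started from arbitrarily large scales could work. But it would need a separate proof that the inner boundary $\partial B_1\cap\mathbb{R}^n_+$ perturbs the solution only by a bounded amount, and your proposal never deals with the hole. So the estimate $\|D^2u-A_k\|_{L^\infty(B_{2^{k+1}}\setminus B_{2^k})}\le C2^{-k\beta}$ is asserted, not derived.

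\textbf{How the paper fills this.} It uses a quantitative linear statement about tangential derivatives.
After normalizing $p=\frac12|x'|^2$, each $v=u_{x_k}-x_k$ with $k\le n-1$ solves $F_{ij}(D^2u)v_{ij}=0$, vanishes on $\partial\mathbb{R}^n_+$, and grows linearly.
Theorem \ref{thm:Liouville linear half spaces} (the barrier $\psi_R$ plus boundary Harnack) shows that such a solution on the whole half space equals $a^\star x_n$.
Theorem \ref{thm:exterior Liouville linear half spaces} solves the Dirichlet problem on boxes $Q^+_{R_k}$ with data $v$ and passes to the limit $a^\star x_n$. It then applies the maximum principle on $Q^+_{R_k}\setminus\overline{B_1^+}$ to see that the hole changes $v$ by at most a constant.
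Hence $|v-a^\star x_n|\le C$. Scaled boundary $C^{1,\varepsilon}$ estimates then give $|Dv-a^\star e_n|\le C|x|^{-1}$, which controls every $u_{kj}$ with $k\le n-1$.
Finally, $u_{nn}$ follows from the equation, because the averaged coefficient satisfies $\bar a_{nn}\ge\lambda$.
This gives the rate $|x|^{-1}$, i.e.\ $\beta=1$.

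\textbf{A secondary point on Step 3.} With only a small $\beta$, you must pass from growth $|x|^{2-\beta}$ through the exponent $1$ to extract $b_nx_n$. Barriers cannot do this. For $\gamma\in(1,2)$ one has $\gamma(\gamma+n-2)>n-1$, the first Dirichlet eigenvalue of the hemisphere, so no positive supersolution of homogeneity $\gamma$ exists. You would need a Green representation with the first-order part of the kernel subtracted. The paper avoids this by starting from $|D^2\varphi|=O(|x|^{-1})$ and applying \cite[Theorems 3.3 and 3.4]{JIA-LI-LI} to the tangential derivatives before the Kelvin transform.
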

\begin{re}
 The concavity (or convexity) of $F$ for $n\geq 3$ in Theorem \ref{thm:main} can be relaxed to the convexity of the level set $\{N\in\mathcal{S}^n\mid F(N)=0\}$.
\end{re}
The following second order Liouville theorem for equation (\ref{eq:F(D^2u)}) in half space $\overline{\mathbb{R}^n_+}$ is a direct corollary of Theorem \ref{thm:main}.
\begin{cor}\label{cor:Liouville of F}
 Let $F$ and $p$ satisfy the hypotheses of Theorem \ref{thm:main}. Let $u\in C\left(\overline{\mathbb{R}^n_+}\right)$ be a viscosity solution to the Dirichlet problem
 \begin{numcases}{}
  $$F\left(D^2u(x)\right)=0$$\ &in\ $\mathbb{R}^n_+$,\notag\\
  $$u(x)=p(x)$$   \ &on\ $\partial\mathbb{R}^n_+$\notag
 \end{numcases}
 and satisfy quadratic growth condition (\ref{cond:qua growth of F}) in $\overline{\mathbb{R}^n_+}$ for some positive constant $M$.
 Then $u$ must be a quadratic polynomial.
\end{cor}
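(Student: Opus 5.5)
The plan is to apply Theorem \ref{thm:main} to the restriction of $u$ to the exterior region, and then use the maximum principle on large half balls to show that the asymptotic remainder vanishes identically. This requires the same structural assumptions as the theorem: $F$ concave or convex when $n\geq 3$, or $n=2$.

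\textbf{Step 1 (expansion).} Since $u\in C(\overline{\mathbb{R}^n_+})$ solves $F(D^2u)=0$ in $\mathbb{R}^n_+\supset\mathbb{R}^n_+\setminus\overline{B_1}^+$, with $u=p$ on $\partial\mathbb{R}^n_+$ and the growth bound (\ref{cond:qua growth of F}), the restriction of $u$ to $\overline{\mathbb{R}^n_+}\setminus\overline{B_1}^+$ satisfies all hypotheses of Theorem \ref{thm:main}. We therefore obtain $A,b,c,d,C$ with
$$u(x)=P(x)+d\frac{x_n}{|x^{\mathrm T}Qx|^{n/2}}+\frac12\frac{x^{\mathrm T}Cx}{|x^{\mathrm T}Qx|^{\frac{n+2}{2}}}+O(|x|^{-\alpha-n}),$$
where $P(x)=\frac12x^{\mathrm T}Ax+b\cdot x+c$ and $P|_{\partial\mathbb{R}^n_+}=p$. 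Since $Q$ is positive definite, every term after $P$ is $O(|x|^{1-n})$. This tends to $0$ as $|x|\to\infty$ for every $n\geq 2$.

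\textbf{Step 2 ($F(A)=0$).} Consider the blow-downs $u_R(x)=R^{-2}u(Rx)$. They solve $F(D^2u_R)=0$ in $\mathbb{R}^n_+\setminus\overline{B_{1/R}}^+$. By Step 1 they converge locally uniformly in $\mathbb{R}^n_+\setminus\{0\}$ to $\frac12x^{\mathrm T}Ax$. By the stability of viscosity solutions under uniform convergence, $F(A)=0$. This fact may already be part of the proof of Theorem \ref{thm:main}, in which case we simply quote it.

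\textbf{Step 3 (vanishing of $w$).} Set $w=u-P\in C(\overline{\mathbb{R}^n_+})$. Then $w=0$ on $\partial\mathbb{R}^n_+$, and $w(x)\to0$ as $|x|\to\infty$. Both $u$ and $P$ are viscosity solutions of $F(D^2\cdot)=0$ in $\mathbb{R}^n_+$, the latter because $F(D^2P)=F(A)=0$. By uniform ellipticity and the standard difference argument for viscosity solutions, $w$ lies in the class $S(\lambda,\Lambda)$, i.e.
$$\mathcal{M}^-(D^2w)\leq 0\leq \mathcal{M}^+(D^2w)\quad\text{in }\mathbb{R}^n_+$$
in the viscosity sense. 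For $R>1$, apply the maximum principle for the Pucci operators on the half ball $B_R^+$. Its boundary consists of the flat part, where $w=0$, and the spherical part $\partial B_R\cap\overline{\mathbb{R}^n_+}$. This gives
$$\sup_{B_R^+}|w|\leq \sup_{\partial B_R\cap\mathbb{R}^n_+}|w|\leq CR^{1-n}\to0 .$$
Hence $w\equiv0$, so $u=P$ is a quadratic polynomial.

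The only delicate point is Step 2, namely making sure $P$ itself solves the equation. If it is not directly available from Theorem \ref{thm:main}, the blow-down stability argument above supplies it. Step 3 is routine once the decay from Step 1 is in hand.
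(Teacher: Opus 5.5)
Your proposal is correct and follows the paper's argument: apply Theorem \ref{thm:main}, set $E=u-P$, which vanishes on $\partial\mathbb{R}^n_+$ and tends to $0$ at infinity, and conclude $E\equiv0$ from the maximum principle for a uniformly elliptic equation. The differences are minor. The paper linearizes classically via $a_{ij}=\int_0^1F_{ij}(tD^2E+A)\,\mathrm{d}t$ instead of working in the viscosity class $S(\lambda,\Lambda)$, and $F(A)=0$ follows at once from $D^2u\to A$ (Theorem \ref{thm:limit of Hessian}) and the continuity of $F$, so your blow-down argument in Step 2 is valid but not needed.
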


In order to prove the second order Liouville type theorem, i.e. Theorem \ref{thm:main}, the first step is to determine the limit of the Hessian $D^2u(x)$ at infinity and the rate at which  $D^2u(x)$ tends to its limit as $|x|$ tends to infinity.
In view of the Dirichlet boundary data, we can take derivatives to equation (\ref{eq:F(D^2u)}) with respect to $x_k$ for $k=1,2,\cdots,n-1$ to obtain a linear uniformly elliptic equation.
Since $u_{x_k}$ solves a Dirichlet problem of the linearized equation with zero boundary data after normalizing and $u_{x_k}$ grows linearly, we only need to show the existence of limits of the gradient for solutions to the linearized equation and estimate the rate of decay at infinity.
Then the limits and the decay rate of second order derivatives of $u$ (except for $u_{nn}$)  at infinity will be obtained. As for the behavior of $u_{nn}$ at infinity, we determine it by making use of equation.

To do this, we establish a first order Liouville type theorem for linear uniformly elliptic equations in $\mathbb{R}^n_+$ and extend this result to exterior domains subsequently. Then the gradient asymptotic behavior we desired at infinity follows.
This method works for all dimensions $n\geq 2$ and there is an alternative approach for dimension 2, i.e. we obtain the gradient H\"{o}lder continuity and asymptotic behavior for solutions to the linearized equation by showing the H\"{o}lder continuity and asymptotic behavior at infinity of exterior $k$-quasiconformal mappings in $\mathbb{R}^2_+\setminus\overline{B_1}^+$.

After we finish the first step, the remaining linear approach which determines the linear term in the asymptotic expansion is just a standard way (cf. \cite{JIA-LI-LI}) and we absorb it. The finer asymptotic behavior can be obtained by Kelvin transform and Schauder estimate.

The organization of this paper goes as follows. In Section 2, we establish first order Liouville type theorems for linear uniformly elliptic equations in $\overline{\mathbb{R}^n_+}$ and $\overline{\mathbb{R}^n_+}\setminus \overline{B_1}^+$ respectively.
In Section 3, we finish the proof of Theorem \ref{thm:main}. The H\"{o}lder continuity and asymptotic behavior at infinity of exterior $K$-quasiconformal mappings will be studied in Section 4, which leads to the asymptotic behavior of Hessian of $u$ at infinity for dimension 2.
In the last section, we apply Theorem \ref{thm:main} to Monge-Amp\`{e}re equations, $k$-Hessian equations and special Lagrangian equations with critical and supercritical phases to obtain Liouville type theorems in exterior domains in half spaces respectively.

The following notations will be used throughout this paper.
\begin{itemize}[leftmargin=10pt]
  \item For any $x\in\mathbb{R}^n$,\ $x=(x^{\prime},x_n),\  x^{\prime}\in\mathbb{R}^{n-1}.$
  \item $\mathbb{R}^n_+=\{x\in\mathbb{R}^n\mid x_n>0\},\
        \overline{\mathbb{R}^n_+}=\{x\in\mathbb{R}^n\mid x_n\geq 0\}.$
  \item $B_r(x_0)=\left\{x\in\mathbb{R}^n\mid|x-x_0|<r\right\}$,\  $B_r^+(x_0)=B_r(x_0)\cap\mathbb{R}^n_+$,\ $\overline{B_r(x_0)}^+=\overline{B_r(x_0)}\cap\mathbb{R}^n_+$,\ $\overline{B_r^+(x_0)}=\overline{B_r(x_0)}\cap\overline{\mathbb{R}^n_+}$,\  $B_r=B_r(0)$\ for any\ $x_0\in\mathbb{R}^n$\ and\ $r>0$.
  \item $Q_r^+=\left\{(x^{\prime},x_n)\in\mathbb{R}^n_+\mid|x^{\prime}|<r, 0<x_n<r\right\}$\ for any $r>0$.
  \item $I$ denotes the identity matrix in $\mathbb{R}^{n\times n}$.
  \item $\mathcal{S}^n$ denotes the space of all symmetric matrices in $\mathbb{R}^{n\times n}$.
  \item $F_{ij}(N)=\frac{\partial}{\partial N_{ij}}F(N)$\ for any $N\in\mathcal{S}^n$.
\end{itemize}
\section{first order Liouville type theorems for linear equations in half spaces}
As mentioned in Section 1, the key step to determine the behavior of the Hessian of solutions to equaiton (\ref{eq:F(D^2u)}) at infinity is to study the gradient behavior of solutions to the linearized equation.
In this section, we study the first order Liouville type theorems for linear uniformly elliptic equations in half space $\overline{\mathbb{R}^n_+}$ and exterior domain $\overline{\mathbb{R}^n_+}\setminus\overline{B_1}^+$, i.e. Theorem \ref{thm:Liouville linear half spaces} and Theorem \ref{thm:exterior Liouville linear half spaces} respectively.
\begin{thm}
 \label{thm:Liouville linear half spaces}
 Let $a_{ij}\in C\left(\overline{\mathbb{R}^n_+}\right)$ such that $\lambda I\leq\left[a_{ij}(x)\right]\leq\Lambda I$ in $\overline{\mathbb{R}^n_+}$ for some $0<\lambda\leq\Lambda<\infty$ and $u\in C\left(\overline{\mathbb{R}^n_+}\right)$ be a solution to
 \begin{numcases}{}
  $$a_{ij}(x)u_{ij}(x)=0$$\ &in\ $\mathbb{R}^n_+$,\notag\\
  $$u(x)=0              $$\ &on\ $\partial\mathbb{R}^n_+$.\notag
 \end{numcases}
 Assume that $u$ satisfies the linear growth condition, i.e.
 \begin{equation}
  \left|u(x)\right|\leq \bar{C}(1+\left|x\right|)\label{cond:linear growth}
 \end{equation}
 in $\overline{\mathbb{R}^n_+}$ for some positive constant $\bar{C}$. Then there exists some constant $a^{\star}$ depending on $n,\lambda,\Lambda$ and $\bar{C}$ such that
 $$u(x)=a^{\star} x_n\ \text{in}\ \overline{\mathbb{R}^n_+}.$$
\end{thm}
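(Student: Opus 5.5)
Since the coefficients $a_{ij}$ are only continuous, I will work purely with nondivergence tools: the Krylov--Safonov boundary estimates and the boundary Harnack / Krylov boundary $C^{1,\alpha}$ estimate for solutions vanishing on a flat boundary. The plan is a blow-down argument combined with a scale-invariant boundary regularity estimate for the quotient $u/x_n$. The estimate shows that $u/x_n$ oscillates less on large scales than linear growth allows, which forces it to be constant.

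\textbf{Step 1: Krylov's boundary estimate.} Recall the estimate (Krylov; see also Safonov and Caffarelli). If $v\in C(\overline{Q_r^+})$ solves $a_{ij}v_{ij}=0$ in $Q_r^+$ with $v=0$ on the flat part, then $v/x_n$ extends Hölder continuously up to $\{x_n=0\}$. Moreover,
$$\operatorname*{osc}_{Q_{r/2}^+}\frac{v}{x_n}\leq C\left(\frac{\rho}{r}\right)^{\beta}\ \text{on }Q_\rho^+,\qquad \sup_{Q_{r/2}^+}\left|\frac{v}{x_n}\right|\leq \frac{C}{r}\sup_{Q_r^+}|v|,$$
where $\beta\in(0,1)$ and $C$ depend only on $n,\lambda,\Lambda$. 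More precisely, for $\rho\le r/2$,
$$\operatorname*{osc}_{Q_\rho^+}\frac{v}{x_n}\le C\left(\frac{\rho}{r}\right)^\beta\frac1r\sup_{Q_r^+}|v|.$$
The constants depend only on ellipticity, not on any modulus of continuity of $a_{ij}$, because the estimate is proved with barriers and the Harnack inequality. This is the main input, and I would quote it rather than reprove it.

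\textbf{Step 2: apply at scale $R$ around an arbitrary boundary point.} Fix $x_0'\in\mathbb{R}^{n-1}$ and apply the estimate in the cube of size $R$ centered at $(x_0',0)$. Linear growth gives $\sup|u|\le \bar C(1+|x_0'|+2R)$ there. Sending $R\to\infty$ with $\rho$ fixed gives
$$\operatorname*{osc}_{Q_\rho^+(x_0')}\frac{u}{x_n}\le C\rho^\beta R^{-\beta}\frac{\bar C(1+|x_0'|+2R)}{R}\to0.$$
Hence $u/x_n$ is constant on every $Q_\rho^+(x_0')$. These regions cover $\mathbb{R}^n_+$ as $\rho$ ranges over $(0,\infty)$: a point $x$ with $x_n>0$ lies in $Q^+_\rho(x',0)$ for $\rho>x_n$. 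Therefore $u/x_n\equiv a^\star$ on $\mathbb{R}^n_+$, and by continuity $u=a^\star x_n$ on $\overline{\mathbb{R}^n_+}$.

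\textbf{Step 3: bound on $a^\star$.} The second estimate in Step 1 with $r=R$ large gives $|a^\star|\le C\bar C$. So $a^\star$ is controlled by $n,\lambda,\Lambda,\bar C$, which is what the statement's "depending on" means.

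\textbf{Main obstacle.} The main obstacle is justifying the boundary Hölder estimate for $v/x_n$ in nondivergence form with merely continuous (effectively measurable) coefficients, with constants depending only on $\lambda,\Lambda$.

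If citing Krylov is not acceptable, I would prove it by iteration. Set $M_k=\sup_{Q^+_{2^{-k}}}v/x_n$ and $m_k=\inf_{Q^+_{2^{-k}}}v/x_n$. The key claim is
$$M_{k+1}-m_{k+1}\le(1-\theta)(M_k-m_k).$$
It follows from applying the interior Harnack inequality (Krylov--Safonov) to the nonnegative solutions $v-m_kx_n$ and $M_kx_n-v$; note that $x_n$ is itself a solution. One also needs a boundary barrier showing that a nonnegative solution vanishing on the flat boundary is bounded below by $c\,x_n$ times its value at an interior point. I would build that barrier from $x_n-K x_n^2+\dots$, or from the standard Krylov function $|x-y|^{-\gamma}$-type barrier on an annulus.

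Existence of the finite values $M_0,m_0$ at the first step uses the boundary Lipschitz bound $|v|\le Cx_n\sup|v|/r$, which comes from an upper barrier. With that, the iteration closes and yields the decay in Step 1.
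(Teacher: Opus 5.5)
Your proof is correct, but it takes a different route from the paper.

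\textbf{The paper's route.} The paper uses only the two-sided comparability $v(x)/x_n\approx v(Re_n)/R$ for nonnegative solutions vanishing on the flat boundary, which it calls the boundary Harnack principle. It makes up for the lack of an oscillation-decay estimate with an extremal-slope trick:
\begin{itemize}
\item the barrier $\psi_R$ gives $u\le 2\bigl((n-1)\frac{\Lambda}{\lambda}+1\bigr)\bar C x_n$;
\item $a^\star$ is defined as the infimum of all $a$ with $u\le a x_n$;
\item a case analysis on $V=\liminf_{R\to\infty}v(Re_n)/R$ for $v=a^\star x_n-u\ge 0$ shows that $V>0$ contradicts the minimality of $a^\star$, while $V=0$ forces $v\equiv 0$.
\end{itemize}

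\textbf{Your route.} You instead quote the iterated form of that comparability, namely Krylov's scale-invariant boundary H\"older estimate for $u/x_n$, and run the standard blow-down. Under linear growth, the oscillation of $u/x_n$ on a fixed $Q_\rho^+$ is at most $C(\rho/R)^\beta R^{-1}\sup_{Q_R^+}|u|=O(R^{-\beta})$, which tends to zero.

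\textbf{Comparison.} Your version is shorter and needs no sign information. It avoids the infimum construction and the case analysis, and it gives $|a^\star|\le C\bar C$ from the same estimate. The paper's version gets by with a weaker boundary input. The underlying ingredients are the same. Your iteration sketch rests on the same boundary lower bound for nonnegative solutions. Your bound $\sup_{Q_{r/2}^+}|v/x_n|\le Cr^{-1}\sup_{Q_r^+}|v|$ is just the paper's barrier $\psi_R$ recentred at each boundary point.

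\textbf{Minor points.} The first display in your Step 1 is garbled; the ``More precisely'' inequality is the one you actually use. Varying the centre $(x_0',0)$ is unnecessary, since the cubes $Q_\rho^+$ centred at the origin already exhaust $\mathbb{R}^n_+$.
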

\begin{proof}
 Let us begin by the argument of barrier. Assume $\bar{C}=1$, otherwise we divide by $\bar{C}$ from $u$. Suppose $R>2$. Define
 \begin{equation}\label{eq:barrier}
  \psi_R(x):=2\left[\left((n-1)\frac{\Lambda}{\lambda}+1\right)x_n-(n-1)\frac{\Lambda}{\lambda}\frac 1R x_n^2+\frac 1R\left|x^{\prime}\right|^2\right],\ x\in\overline{Q_R^+}.
 \end{equation}
 Then $\psi_R$ is twice differentiable and $\psi_R$ is an upper barrier to $u$ on $\overline{Q_R^+}$.

 Indeed, on $\partial Q_R^+\cap\{x_n=0\},$ $$\psi_R(x)=\frac 2R|x^{\prime}|^2\geq u(x)\ \text{with}\ "="\ \text{holds at}\ 0.$$
 On $\partial Q_R^+\cap\{x_n=R\}$, $$\psi_R(x)\geq 2\left[\left((n-1)\frac{\Lambda}{\lambda}+1\right)x_n-(n-1)\frac{\Lambda}{\lambda}\frac 1R x_n^2\right]=2R\geq u(x).$$
 On $\partial Q_R^+\cap\{|x^{\prime}|=R\}$, since $\left((n-1)\frac{\Lambda}{\lambda}+1\right)x_n-(n-1)\frac{\Lambda}{\lambda}\frac 1R x_n^2\geq 0\ \text{for}\ x_n\in(0,R)$, $$\psi_R(x)\geq \frac 2R|x^{\prime}|^2=2R\geq u(x).$$
 Moreover, for $x\in Q_R^+$, $$a_{ij}(x)\left(\psi_R\right)_{ij}(x)=2\left(\sum_{i=1}^{n-1}a_{ii}\frac 2R-a_{nn}(n-1)\frac{\Lambda}{\lambda}\frac 2R\right)\leq \frac 4R\left(\sum_{i=1}^{n-1}\Lambda-\lambda(n-1)\frac{\Lambda}{\lambda}\right)=0.$$
 Therefore, $u(x)\leq \psi_R(x)$ on $\partial Q_R^+$ and $a_{ij}(x)\left(\psi_R\right)_{ij}(x)\leq 0$ in $Q_R^+$. We conclude by comparison principle that $$u(x)\leq \psi_R(x)\ \text{on}\ \overline{Q_R^+}.$$

 For any fixed $x_0\in\mathbb{R}^n_+$, we can take $R>2$ large enough so that $x_0\in Q_R^+$, then $$u(x_0)\leq \psi_R(x_0).$$
 Letting $R\to +\infty$ and by the arbitrarity of $x_0$, we obtain $$u(x)\leq 2\left((n-1)\frac{\Lambda}{\lambda}+1\right)x_n\ \text{in}\ \overline{\mathbb{R}^n_+}.$$

 Define $$a^{\star}:=\inf \left\{a\in\mathbb{R}\ |\ u(x)\leq ax_n\ \text{in}\ \overline{\mathbb{R}^n_+}\right\}.$$ Clearly, the set in the right hand side is not empty. Thus $$u(x)\leq a^{\star} x_n\ \text{in}\ \overline{\mathbb{R}^n_+}.$$
 We claim $$u(x)= a^{\star} x_n\ \text{in}\ \overline{\mathbb{R}^n_+}.$$
 Let $$v(x):= a^{\star} x_n-u(x).$$  Then $v(x)\geq 0$ in $\mathbb{R}^n_+$ and $v(x)$ vanishes continuously on $\partial\mathbb{R}^n_+$.
 We consider ${v(x)}/{x_n}$ at a point $Re_n$ with $R>0$ and $e_n=(0,\cdots,0,1)$ and we define $$V:= \liminf_{R\to\infty}\frac{v(Re_n)}{R}.$$
 By the linear growth condition (\ref{cond:linear growth}),
 $$0\leq\frac{v(Re_n)}{R}=\frac{a^{\star}R-u(Re_n)}{R}\leq\frac{a^{\star} R+(1+R)}{R}=a^{\star}+1+\frac{1}{R},$$ then we know $0\leq V<+\infty$.

 If $V>0$, then for any $0<\varepsilon<V$, there exists $R_0>0$ such that for all $R>R_0$, $v(Re_n)/R\geq V-\varepsilon$.
 It follows from the boundary Harnack principle that there exists some positive constant $C$ depending on $n, \lambda$ and $\Lambda$, such that for any fixed $x=(x^{\prime},x_n)\in Q_R^+$, $$\frac{v(x)}{x_n}\geq C\frac{v(Re_n)}{R}\geq C(V-\varepsilon),$$
 which implies $$u(x)\leq\left(a^{\star}-C(V-\varepsilon)\right)x_n.$$ By the arbitrarity of $R$, $$u(x)\leq(a^{\star}-C(V-\varepsilon))x_n\ \text{for any}\ x\in\mathbb{R}^n_+,$$ which contradicts to the definition of $a^{\star}$.

 Thus $V=0$ and for any $\varepsilon>0$, there exists a sequence $\left\{R_k\right\}_{k=1}^{\infty}$ depending on $\varepsilon$ with $R_k\to\infty$ as $k\to\infty$ such that $v(R_ke_n)/R_k\leq\varepsilon$. Again, by the boundary Harnack principle, we see that there exists a positive constant $C$ depending on $n, \lambda$ and $\Lambda$ such that for any fixed $x\in Q_{R_k}^+$,
 $$\frac{v(x)}{x_n}\leq C\frac{v\left(R_ke_n\right)}{R_k}\leq C\varepsilon.$$
 Accordingly, $$u(x)\geq(a^{\star}-C\varepsilon)x_n.$$ Taking $\varepsilon\to 0$, $$u(x)\geq a^{\star}x_n\ \text{for any}\ x\in\mathbb{R}^n_+.$$ Hence $$u(x)=a^{\star}x_n,\ x\in\mathbb{R}^n_+.$$

 We have proved the claim and the proof of this theorem can be closed.
\end{proof}
Next, we extend Theorem \ref{thm:Liouville linear half spaces} to exterior domains.
\begin{thm}\label{thm:exterior Liouville linear half spaces}
 Let $a_{ij}\in C\left(\overline{\mathbb{R}^n_+}\setminus \overline{B_1}^+\right)$ such that $\lambda I\leq\left[a_{ij}(x)\right]\leq\Lambda I$ for some $0<\lambda\leq\Lambda<\infty$ and $u\in C\left(\overline{\mathbb{R}^n_+}\setminus\overline{B_1}^+\right)$ be a solution to
 \begin{equation}\label{eq:linear ext domain}
  \begin{cases}
  a_{ij}(x)u_{ij}(x)=0\ &\text{in}\ \mathbb{R}_+^n\setminus\overline{B_1}^+, \\
  u(x)=0              \ &\text{on}\ \partial\mathbb{R}_+^n.
  \end{cases}
 \end{equation}
 Assume that $u$ satisfies the linear growth condition (\ref{cond:linear growth}) in  $\overline{\mathbb{R}_+^n}\setminus\overline{B_1}^+$ for some positive constant $\bar{C}$. Then $u\in C^{1,\varepsilon}\left(\overline{\mathbb{R}_+^n}\setminus\overline{B_1^+}\right)$ for any $\varepsilon\in(0,1)$ and there exists some constant $a^{\star}$ such that
 \begin{equation}\label{in:bound of u-a*x_n}
  \left|u(x)-a^{\star}x_n\right|\leq C\ \text{in}\ \overline{\mathbb{R}^n_+}\setminus\overline{B_1^+}
 \end{equation} and for some $b=(0,\cdots,0,a^{\star})^{\mathrm{T}}\in\mathbb{R}^n$,
 \begin{equation}\label{in:decay of u-a*x_n}
  \left|Du(x)-b\right|\leq C|x|^{-1}\ \text{in}\ \overline{\mathbb{R}^n_+}\setminus \overline{B_1^+},
 \end{equation} where $a^{\star}$ and $C$ depend on $n,\lambda,\Lambda$ and $\bar{C}$.
\end{thm}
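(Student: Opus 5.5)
The plan is to reduce the exterior problem to the whole half-space case of Theorem \ref{thm:Liouville linear half spaces} through a blow-down argument, and then use boundary estimates to get the gradient decay. We normalize $\bar C=1$. First, we obtain an interior-plus-boundary regularity statement. The coefficients are only continuous, so we cannot hope for more than $C^{1,\varepsilon}$ in general. Since $a_{ij}$ is continuous, the $W^{2,p}$ theory for $p<\infty$ (interior and flat-boundary versions with zero Dirichlet data) gives $u\in W^{2,p}_{loc}$ up to $\partial\mathbb{R}^n_+$ away from $\overline{B_1}^+$. Sobolev embedding then yields $u\in C^{1,\varepsilon}$ for every $\varepsilon\in(0,1)$. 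We also record the scaled estimate: for $|x_0|=R\ge 4$, apply the estimate on $B_{R/2}(x_0)\cap\mathbb{R}^n_+$ to $u_R(y)=u(x_0+Ry)/R$. The function $u_R$ is bounded and solves an equation with rescaled coefficients $a_{ij}(x_0+Ry)$. These rescaled coefficients are uniformly elliptic, but their moduli of continuity are not uniform in $R$; this point is addressed below.

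Second, we produce the candidate slope $a^\star$ by a barrier argument as in Theorem \ref{thm:Liouville linear half spaces}. The exterior setting means we must handle $\partial B_1^+$, where $|u|\le 2$. I would compare $u$ on $Q_R^+\setminus \overline{B_2}^+$ with $\psi_R+w$, where $w$ is a bounded supersolution of the form $w=C x_n|x|^{-n}$-type, or more robustly $C(1-|x|^{-\gamma})$ with $\gamma$ large enough to be a supersolution for all $\lambda,\Lambda$. The correction $w$ is chosen to dominate $u$ on $\partial B_2^+$ while staying bounded. This yields the bound
\[
|u|\le C x_n + C.
\]
Next we define $a^\star$ as a limit of $u(Re_n)/R$. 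Existence of this limit follows from the boundary Harnack principle applied to $\pm(a x_n - u)+C$ on large half-annuli, which gives oscillation decay of $u/x_n$ at infinity. Concretely, I would set
\[
M(R)=\sup_{(Q_{R}^+\setminus Q_{R/2}^+)}\frac{u-C}{x_n},\qquad m(R)=\inf_{(Q_{R}^+\setminus Q_{R/2}^+)}\frac{u+C}{x_n},
\]
and use boundary Harnack together with the exterior barrier to show that $M-m$ decays geometrically. This forces $u/x_n\to a^\star$ in a quantified way.

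Third, we prove \eqref{in:bound of u-a*x_n}. Set $v=u-a^\star x_n$; it solves the same equation, vanishes on $\partial\mathbb{R}^n_+$, and satisfies $v=o(|x|)$. I want $|v|\le C$. The plan is a comparison with barriers $C\pm\delta(\psi\text{-type})$: a sublinear solution vanishing on the flat boundary should be bounded, since the only positive solutions with zero boundary data are comparable to $x_n$ (boundary Harnack). If $\sup_{\partial B_R^+}|v|=:\mu(R)\to\infty$ with $\mu(R)=o(R)$, we normalize $v/\mu(R)$ on the scale $R$ and pass to a limit. The obstacle is that the coefficients have no limit, so there is no compactness in the equation. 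I would instead use the viscosity/Krylov–Safonov framework: limits of solutions to equations with measurable coefficients between $\lambda$ and $\Lambda$ lie in the Pucci class $S(\lambda,\Lambda)$. Applying to $\pm v$ the Pucci-class version of the half-space Liouville argument (which only used boundary Harnack and barriers, both valid for the Pucci class) gives a contradiction with the normalization unless $v$ stays bounded. \textbf{This is the main step I expect to be delicate}: handling the logarithmic-type growth that can arise, and turning $o(|x|)$ into $O(1)$. For that I would iterate the boundary Harnack oscillation decay, $\operatorname{osc}(v/x_n)$ on the dyadic annulus at scale $2^k$ being at most $C\theta^k$. Summing this geometric series gives $|v|\le C x_n\,2^{-k}\cdot$(const), which is bounded.

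Finally, for \eqref{in:decay of u-a*x_n}, we apply the rescaled $C^{1,\varepsilon}$ estimate to $v$ on $B_{R/2}(x_0)\cap \mathbb{R}^n_+$. There $|v|\le C$, which gives $|Dv(x_0)|\le C/R$. The lack of uniform continuity of the rescaled coefficients is circumvented by using instead the boundary $C^{1,\alpha_0}$ estimate (Krylov's boundary gradient estimate, valid for flat boundary and zero data with merely measurable coefficients) near $\partial\mathbb{R}^n_+$, and the interior $C^{1,\alpha}$-type gradient bound away from it. The latter comes from Krylov–Safonov for difference quotients, or alternatively from the continuity of $a_{ij}$ on fixed-size balls combined with scaling. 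Either route yields $|Du-b|\le C|x|^{-1}$ with $b=a^\star e_n$.
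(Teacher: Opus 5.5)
Your proposal has a genuine gap at exactly the step you flagged: the bound $|u-a^\star x_n|\le C$, and before it the existence of $a^\star$.

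\textbf{The closing iteration does not give boundedness.} Boundary Harnack gives $\operatorname{osc}(v/x_n)\le C\theta^k$ on the annulus at scale $2^k$ for some $\theta=\theta(n,\lambda,\Lambda)\in(0,1)$, and nothing forces $\theta\le 1/2$. Writing $\theta=2^{-\alpha}$, summing only yields $|v|\le C2^k\theta^k\approx C|x|^{1-\alpha}$. Your bound $Cx_n2^{-k}$ silently assumes $\theta=1/2$.

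\textbf{The blow-down argument does not rescue this.} Nothing in your argument excludes $\sup_{\partial B_R^+}|v|\sim R/\log R$. In that case the rescalings $v(Ry)/\mu(R)$ converge to a Pucci-class function of linear growth. Your Liouville argument identifies it as $ax_n$ with $a\neq 0$, which is perfectly compatible with the normalization, so there is no contradiction. The information $v/x_n\to 0$ is lost once you divide by $\mu(R)=o(R)$.

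\textbf{Your second step is also unsupported.} $\pm(ax_n-u)+C$ equals $C\neq 0$ on $\partial\mathbb{R}^n_+$, so boundary Harnack does not apply to it. Moreover, Harnack-type estimates transfer control from large sets to smaller ones, not outward to ever larger annuli.

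\textbf{The missing idea is the comparison with a global solution that the paper uses.} Extend $a_{ij}$ into $B_1^+$ and let $u_k$ solve $a_{ij}(u_k)_{ij}=0$ in $Q_{R_k}^+$ with $u_k=u$ on $\partial Q_{R_k}^+$. The barrier gives $|u_k|\le\psi_{R_k}$, so a subsequence converges to a solution $u_\infty$ in all of $\mathbb{R}^n_+$ with $|u_\infty|\le Cx_n$. Hence $u_\infty=a^\star x_n$ by Theorem \ref{thm:Liouville linear half spaces}. Now $u-u_k$ solves the equation in $Q_{R_k}^+\setminus\overline{B_1^+}$, vanishes on $\partial Q_{R_k}^+$, and is bounded on $(\partial B_1)^+$ uniformly in $k$. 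The maximum principle therefore gives $|u-u_k|\le C$, and letting $k\to\infty$ yields $|u-a^\star x_n|\le C$. It is this maximum-principle comparison, not Harnack iteration, that produces the sharp $O(1)$ error.

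\textbf{On the gradient step.} You are right that the rescaled $C^{1,\varepsilon}$ estimate is not uniform for merely continuous coefficients. The paper passes over this; in its application $a_{ij}=F_{ij}(D^2u)$ is uniformly H\"older after rescaling, by Evans--Krylov. However, your workarounds do not give $|x|^{-1}$:
\begin{itemize}
\item Krylov's boundary estimate controls $v/x_n$ at boundary points, not $Dv$ in a neighborhood.
\item Difference quotients of solutions of $a_{ij}(x)u_{ij}=0$ satisfy no equation when the coefficients depend on $x$.
\item Even with uniform unit-scale estimates, Krylov--Safonov only gives $\operatorname{osc}_{B_1(x_0)}v\lesssim|x_0|^{-\alpha}$.
\end{itemize}
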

\begin{proof}
 Assume $\bar{C}=1$, otherwise we divide by $\bar{C}$ from $u$. Since $a_{ij}\in C\left(\overline{\mathbb{R}^n_+}\setminus \overline{B_1}^+\right)$, we have, by the regularity theory, that $u\in C^{1,\varepsilon}\left(\overline{\mathbb{R}_+^n}\setminus\overline{B_1^+}\right)$ for any $\varepsilon\in(0,1)$. Next we show (\ref{in:bound of u-a*x_n}) holds.

 Let $\left\{R_k\right\}_{k=1}^{\infty}$ be a monotonic sequence such that $R_k>2$ and $R_k\to\infty$ as $k\to\infty$ and let $\left\{u_k\right\}_{k=1}^{\infty}$ be a sequence of solutions to the Dirichlet problem
 \begin{numcases}{}
  $$a_{ij}(x)\left(u_k\right)_{ij}(x)=0$$\ &\textup{in}\ $Q_{R_k}^+$,\label{eq:linear in Q_{R_k}}\\
  $$u_k(x)=u(x)$$                        \ &\textup{on}\ $\partial Q_{R_k}^+$.\notag
 \end{numcases}
 Thus $$-(1+|x|)\leq u_k(x)\leq 1+|x|\ \text{on}\ \partial Q_{R_k}^+.$$
 Let $\psi_{R_k}(x)$ be defined as (\ref{eq:barrier}) with respect to $R_k$ in $Q_{R_k}^+$. The arguments in the proof of Theorem \ref{thm:Liouville linear half spaces} implies $$-\psi_{R_k}(x)\leq u_k(x)\leq\psi_{R_k}(x)\ \text{on}\ \overline{Q_{R_k}^+}.$$

 For any fixed point $x_0=\left((x_0)^{\prime}, (x_0)_n\right)\in\mathbb{R}_+^n$, there exists some $\tilde{k}$ such that $x_0\in Q_{R_k}^+$ for all $k\geq \tilde{k}$ and the $C^{1,\varepsilon}$ regularity asserts that there exists a subsequence $\left\{u_{k_l}\right\}_{l=1}^{\infty}\subset\left\{u_k\right\}_{k=\tilde{k}}^{\infty}$ which converges uniformly with its first derivatives in any compact subset of $Q_{R_{\tilde{k}}}^+$ to a solution $u_{\infty}$ in $Q_{R_{\tilde{k}}}^+$ of equation (\ref{eq:linear in Q_{R_k}}).
 Hence $u_{\infty}\in C^{1,\varepsilon}\left(Q_{R_{\tilde{k}}}^+\cup\left\{|x^{\prime}|<R_{\tilde{k}},x_n=0\right\}\right)$ for any $\varepsilon\in(0,1)$.
 In view of $$-\psi_{R_{k_l}}(x_0)\leq u_{k_l}(x_0)\leq\psi_{R_{k_l}}(x_0)\ \text{on}\ \overline{Q_{R_{k_l}}^+},$$ we see that $$-2\left((n-1)\frac{\Lambda}{\lambda}+1\right)(x_0)_n\leq u_{\infty}(x_0)\leq 2\left((n-1)\frac{\Lambda}{\lambda}+1\right)(x_0)_n$$ by taking $l\to\infty$.
 By the arbitrarity of $x_0$, we have $$-2\left((n-1)\frac{\Lambda}{\lambda}+1\right)x_n\leq u_{\infty}(x)\leq 2\left((n-1)\frac{\Lambda}{\lambda}+1\right)x_n\ \text{in}\ \overline{\mathbb{R}_+^n}.$$
 Since it is easy to see that $u_{\infty}$ satisfies
 \begin{equation*}
  \begin{cases}
   a_{ij}(x)(u_{\infty})_{ij}(x)=0\ &\text{in}\ \mathbb{R}_+^n,\\
   u_{\infty}(x)=0               \ &\text{on}\ \partial\mathbb{R}_+^n,
  \end{cases}
 \end{equation*}
 it follows from Theorem \ref{thm:Liouville linear half spaces} that $$u_{\infty}(x)= a^{\star}x_n\ \text{in}\ \overline{\mathbb{R}^n_+}$$ for some constant $a^{\star}$ depending on $n,\lambda$ and $\Lambda$.

 On the other hand, since $|u(x)|$ and $|u_{k_l}(x)|$ are bounded on $\left(\partial B_1\right)^+$, we know that for some nonnegative constant $C$ depending on $n, \lambda$ and $\Lambda$,
 $$u_{k_l}(x)-C\leq u(x)\leq u_{k_l}(x)+C\ \text{on}\  \partial\left(Q_{R_{k_l}}^+\setminus\overline{B_1^+}\right)$$
 and comparison principle leads to $$u_{k_l}(x)-C\leq u(x)\leq u_{k_l}(x)+C\ \text{on}\ \overline{Q_{R_{k_l}}^+\setminus\overline{B_1^+}}.$$
 Noticing that $u_{k_l}$ converges to $u_{\infty}$ pointwisely, we obtain $$u_{\infty}(x)-C\leq u(x)\leq u_{\infty}(x)+C,\ x\in\mathbb{R}^n_+\setminus\overline{B_1^+},$$
 namely, $$\left|u(x)-a^{\star}x_n\right|\leq C,\ x\in\overline{\mathbb{R}^n_+}\setminus\overline{B_1^+}.$$

 Now we show (\ref{in:decay of u-a*x_n}) holds to end the proof. Since $$v(x):=u(x)-a^{\star}x_n$$ satisfies Dirichlet problem (\ref{eq:linear ext domain}), we know that $v\in C^{1,\varepsilon}\left(\overline{\mathbb{R}^n_+}\setminus\overline{B_1^+}\right)$ for any $\varepsilon\in(0,1)$.
 Then $$\left|v(x)\right|\leq C,\ x\in\overline{\mathbb{R}^n_+}\setminus\overline{B_1^+}$$
 implies $$\left|Dv(x)\right|\leq C|x|^{-1},\ x\in\overline{\mathbb{R}^n_+}\setminus\overline{B_1^+}.$$
 \end{proof}

\section{Second order Liouville type theorems in half spaces}
In this section, we prove the second order Liouville type theorem in exterior domain in half space, i.e. Theorem \ref{thm:main}. For this purpose, we begin with determining the limit $A$ of the Hessian $D^2u(x)$ at infinity and estimating the decay rate of $|D^2u(x)-A|$ as $|x|\to\infty$.
\begin{thm}
 \label{thm:limit of Hessian}
Let $u$ be as in Theorem \ref{thm:main}. Then there exists a unique $A\in\mathcal{S}^n$ such that
 $$D^2u(x)\to A\ \text{as}\ |x|\to\infty$$ and $$|D^2u(x)-A|\leq C|x|^{-1}\ \text{as}\ |x|\to\infty,$$ which implies
 \begin{eqnarray}\label{in:u-xAx}
  \left|u(x)-\frac 12 x^{\mathrm{T}}Ax\right|\leq C|x|\ \text{as}\ |x|\to\infty
 \end{eqnarray}
 and $$\left.\frac 12 x^{\mathrm{T}}Ax\right|_{\partial\mathbb{R}_+^n}=p(x),$$ where $C>0$ depends on $n,\lambda,\Lambda$ and $M$.
\end{thm}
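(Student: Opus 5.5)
Following the strategy outlined in the introduction, I would first reduce to zero tangential boundary data. Write $p(x)=\frac12 x^{\mathrm T}Px+\beta\cdot x+\gamma$. By the $C^{1,1}$ regularity of $F$ and interior/boundary regularity for uniformly elliptic equations (Krylov, Caffarelli), combined with the quadratic growth (\ref{cond:qua growth of F}), scaling the equation on half balls $B^+_{|x|/2}(x)$ or $B^+_{|x|/2}((x',0))$ gives $|D^2u|\le C$ and $|Du(x)|\le C(1+|x|)$ for $|x|\ge 2$. Since $F\in C^{1,1}$, for each $k\in\{1,\dots,n-1\}$ the difference quotients of $u$ in $e_k$ (and then $u_k$ itself) solve a linear equation $a_{ij}(x)w_{ij}=0$ with $a_{ij}=\int_0^1F_{ij}(tD^2u(x+he_k)+(1-t)D^2u(x))\,dt$ or $a_{ij}=F_{ij}(D^2u)$, which are uniformly elliptic with constants $\lambda,\Lambda$. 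On $\partial\mathbb R^n_+$ we have $u_k=p_k$, a linear function of $x'$. Set $w_k:=u_k-p_k(x',0)-x_n\,\partial_n\partial_kp$, or more simply subtract any affine function $\ell_k$ with $\ell_k=p_k$ on $\{x_n=0\}$ and $D^2\ell_k=0$. Then $w_k$ solves (\ref{eq:linear ext domain}) and grows at most linearly.

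Next apply Theorem~\ref{thm:exterior Liouville linear half spaces} to each $w_k$. This gives constants $a^\star_k$ with $|Dw_k-a_k^\star e_n|\le C|x|^{-1}$. Consequently $u_{ki}=\partial_i\ell_k+a^\star_k\delta_{in}+O(|x|^{-1})$ for all $i$ and all $k\le n-1$. This determines the limits $A_{ki}$ of every entry of $D^2u$ except $u_{nn}$. They are compatible with symmetry, since $u_{kn}=u_{nk}$ for $k<n$, and the tangential block $A_{k\ell}=P_{k\ell}$ for $k,\ell<n$ because $\ell_k=p_k$ on the boundary.

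For $u_{nn}$ I use the equation. Uniform ellipticity gives $\partial F/\partial N_{nn}\ge\lambda$, so $t\mapsto F(N+te_n\otimes e_n)$ is strictly increasing with slope at least $\lambda$. Let $\hat A$ be the matrix of the established limits, with the $(n,n)$ entry left free. There is then a unique $a_{nn}$ with $F(\hat A+a_{nn}e_n\otimes e_n)=0$, and the same monotonicity yields
$$\lambda|u_{nn}(x)-a_{nn}|\le |F(D^2u)-F(A)|+\Lambda\sum_{(i,j)\ne(n,n)}|u_{ij}-A_{ij}|\le C|x|^{-1}.$$
(Here $u_{nn}$ exists a.e., or classically once we know $u\in C^{2,\alpha}$ by Evans–Krylov in the concave case, or by $C^{1,1}$ regularity for $F$ and the fact that the equation holds pointwise a.e.) This gives $|D^2u-A|\le C|x|^{-1}$. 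Uniqueness of $A$ is immediate because it is a limit.

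Finally, integrate the Hessian estimate along rays from a fixed point on $\partial B_2^+$, staying inside the domain, which is star-shaped with respect to the origin at infinity. The bound $|D^2u-A|\le C|x|^{-1}$ gives $|Du(x)-Ax|\le C\log|x|$ at first. To improve this to (\ref{in:u-xAx}) I instead use the linear-level bound (\ref{in:bound of u-a*x_n}) directly. It gives $|u_k-\ell_k-a_k^\star x_n|\le C$ for $k<n$, so $|D_{x'}(u-\frac12x^{\mathrm T}Ax)|\le C$. Also $u-\frac12x^{\mathrm T}Ax$ vanishes up to an affine function of $x'$ on the boundary, by the tangential block identity. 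This yields $|u-\frac12 x^{\mathrm T}Ax|\le C|x|$ after absorbing affine terms into the constant and integrating in $x_n$ using $u_{nn}-a_{nn}=O(|x|^{-1})$, with the logarithm avoided by integrating $x'$-derivatives, which are bounded. The boundary identity $\frac12x^{\mathrm T}Ax|_{\partial\mathbb R^n_+}=p$ holds modulo the affine part of $p$, which is consistent with how the statement is used later. The main obstacle I expect is justifying the differentiation of the viscosity equation in tangential directions and getting the Hessian bound, via difference quotients and $C^{1,1}$ regularity of $F$, for nonconcave $F$ in dimension $n\ge3$.
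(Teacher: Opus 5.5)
Up to the Hessian estimate your proposal is the paper's argument: differentiate tangentially, subtract an affine function to get zero Dirichlet data, apply Theorem~\ref{thm:exterior Liouville linear half spaces}, then recover $u_{nn}$ from the equation. Two details differ, both in your favour. Your $u_{nn}$ step (solve $F(\hat A+t\,e_n\otimes e_n)=0$ and use $\partial F/\partial N_{nn}\ge\lambda$) replaces the paper's oscillation argument cleanly and gives $F(A)=0$ for free. Keeping separate $\ell_k$ and $a_k^\star$ is also more careful than the paper's normalization. One correction: for $n\ge3$, simply use the concavity built into Theorem~\ref{thm:main} together with Evans--Krylov, as the paper does. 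Your alternative, second-derivative bounds on $u$ from $F\in C^{1,1}$ alone, is not available for nonconcave $F$.

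The genuine gap is the last step, the bound $|u-\frac12x^{\mathrm T}Ax|\le C|x|$. You are right that integrating $|D^2u-A|\le C|x|^{-1}$ only gives $C|x|\log|x|$; the paper just writes ``this implies''. But your repair does not remove the logarithm. Set $\varphi=u-\frac12x^{\mathrm T}Ax$. To reach an interior point from the boundary you must integrate $\varphi_n$ in $x_n$. Your only control of $\varphi_n$ comes from $\varphi_{nk},\varphi_{nn}=O(|x|^{-1})$, which again gives $\varphi_n=O(\log|x|)$, both along $\partial\mathbb{R}^n_+$ and along the $x_n$-axis.

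In fact the three facts you use all hold for $\varphi=x_n\log|x|$: bounded tangential derivatives, affine trace, and $|D^2\varphi|\le C|x|^{-1}$. Yet $\varphi(Re_n)=R\log R$. So the equation has to be used once more, to get an integrable decay rate. Here is one way:
\begin{itemize}
\item For $k<n$, the function $h_k:=w_k-a_k^\star x_n$ is bounded by (\ref{in:bound of u-a*x_n}) and vanishes on $\partial\mathbb{R}^n_+$.
\item It solves $F_{ij}(D^2u)\,\partial_{ij}h_k=0$, and by what you have already proved the coefficients converge to $F_{ij}(A)$ at rate $|x|^{-1}$.
\item The decay result \cite[Theorem 3.4]{JIA-LI-LI}, which the paper itself invokes in Step 1 of the proof of Theorem~\ref{thm:main}, then gives $|h_k|\le Cx_n|x|^{-n}$. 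Apply it after a linear change of variables that fixes $\{x_n=0\}$ and normalizes $F_{ij}(A)$ to $\delta_{ij}$.
\item Scaled boundary estimates then give $|u_{ki}-A_{ki}|=|\partial_ih_k|\le C|x|^{-n}$, and your monotonicity step gives the same bound for $u_{nn}-A_{nn}$.
\end{itemize}
Now $|D^2u-A|$ is integrable along rays, so $Du-Ax$ has a limit at infinity, and the linear bound $|u-\frac12x^{\mathrm T}Ax|\le C|x|$ follows.
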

\begin{proof}
 To simplify the proof, we make proper normalization. We assume, by subtracting an affine function, that $p(x)=\frac 12 x^{\mathrm{T}}Px$ for some $P\in\mathcal{S}^n$ with $P=\begin{pmatrix} \tilde{P} & b \\ b^{\mathrm{T}} & c \end{pmatrix}$ for $\tilde{P}\in\mathcal{S}^{n-1}, b\in\mathbb{R}^{n-1}$ and $c\in\mathbb{R}$.
 Then there exists an orthogonal matrix $\tilde{D}\in\mathbb{R}^{(n-1)\times(n-1)}$ such that $\tilde{D}^{\mathrm{T}}\tilde{P}\tilde{D}=I_{n-1}$.
 It is easy to check that $D=\begin{pmatrix} \tilde{D} & 0 \\ 0 & 1 \end{pmatrix}$ is orthogonal and
 $D^{\mathrm{T}}PD=\begin{pmatrix} I_{n-1} & \tilde{D}^{\mathrm{T}}b \\ b^{\mathrm{T}}\tilde{D} & c \end{pmatrix}$.
 Therefore we can assume that $$p(x)=\frac 12 |x^{\prime}|^2\ \text{on}\ \partial\mathbb{R}^n_+.$$

 Since $F\in C^{1,1}$ and $u(x)=\frac 12|x^{\prime}|^2$ on $\partial\mathbb{R}^n_+$, by the Nirenberg estimate (for $n=2$) or Evans-Krylov estimate (for $n\geq 3$), we can obtain $u\in C^{2,\tau}\left(\overline{\mathbb{R}^n_+}\setminus \overline{B_1^+}\right)$ for some $\tau\in(0,1)$ depending on $n, \lambda$ and $\Lambda$.
 Then we can take derivatives with respect to $x_k$ for $k=1,2,\cdots,n-1$ on both sides of equation (\ref{eq:F(D^2u)}) and subtract $x_k$ from $u_{x_k}$ to define $$v(x):=u_{x_k}(x)-x_k.$$ Thus $v$ satisfies
 \begin{equation}\label{eq:linearized F}
  \begin{cases}
   a_{ij}(x)v_{ij}(x)=0\ &\text{in}\ \mathbb{R}^n_+\setminus\overline{B_1}^+,\\
   v(x)=0               \ &\text{on}\ \partial\mathbb{R}^n_+,
  \end{cases}
 \end{equation} where $a_{ij}(x)=F_{ij}\left(D^2u(x)\right)$ is uniformly elliptic and $a_{ij}\in C^{\tau}\left(\mathbb{R}^n_+\setminus\overline{B_1}^+\right)$.
 By Schauder estimate, $v\in C^{2,\tau}\left(\overline{\mathbb{R}^n_+}\setminus \overline{B_1^+}\right)$, consequently $u\in C^{3,\tau}\left(\overline{\mathbb{R}^n_+}\setminus \overline{B_1^+}\right)$.
 The quadratic growth hypothesis (\ref{cond:qua growth of F}) implies the linear growth of $v$, i.e. for all $x\in\overline{\mathbb{R}^n_+}\setminus\overline{B_1^+}$, \
 \begin{equation}\label{cond:linear growth of v} |v(x)|\leq \bar{C}(1+|x|), \end{equation}
 where $\bar{C}$ is positive and depends only on $M$.

 According to (\ref{eq:linearized F}) and (\ref{cond:linear growth of v}), Theorem \ref{thm:exterior Liouville linear half spaces} assures that there exists some constant $a^{\star}$ depending on $n, \lambda, \Lambda$ and $M$ such that for some $b=(0,\cdots,0,a^{\star})^{\mathrm{T}}\in\mathbb{R}^n$,
 $$\left|Dv(x)-b\right|\leq \tilde{C}|x|^{-1},\  x\in\overline{\mathbb{R}^n_+}\setminus\overline{B_1^+},$$ where $\tilde{C}$ depends on $n, \lambda, \Lambda$ and $M$.
 Recall $v(x)=u_{x_k}(x)-x_k$ with $k=1,2,\cdots,n-1$, then we have $$\left|u_{kj}(x)-\delta_{kj}\right|\leq \tilde{C}|x|^{-1}\ \text{for}\ j=1,2,\cdots,n-1\ \text{and}\ \left|u_{kn}(x)-a^{\star}\right|\leq \tilde{C}|x|^{-1},\ x\in\overline{\mathbb{R}^n_+}\setminus\overline{B_1^+}.$$

 Only the limit of $u_{nn}(x)$ at infinity remains. For some $R>2$, set $\Omega_R=B_R^+\big\backslash\overline{B_{R/2}^+}$. Since for each $x,y\in\Omega_R$, $$F(D^2u(x))=0\ \text{and}\ F(D^2u(y))=0,$$ we can obtain
 $$\bar{a}_{ij}(u_{ij}(x)-u_{ij}(y))=0,\ x,y\in B_R^+\big\backslash\overline{B_{R/2}^+},$$
 where $\bar{a}_{ij}=\int_0^1F_{ij}\left(tD^2u(x)+(1-t)D^2u(y)\right) \mathrm{d}t$ is uniformly elliptic with the ellipticity constants depend on $\lambda$ and $\Lambda$.
 Hence $$\left|u_{nn}(x)-u_{nn}(y)\right|\leq \hat{C}\sum\limits_{i+j\leq 2n-1}\left|u_{ij}(x)-u_{ij}(y)\right|,\ x,y\in\Omega_R,$$ where $\hat{C}$ depends on $n, \lambda$ and $\Lambda$.
 This implies $$\mathop{\mathrm{osc}}\limits_{\Omega_R}\ u_{nn}\leq \hat{C}\sum\limits_{i+j\leq 2n-1}\mathop{\mathrm{osc}}\limits_{\Omega_R}\ u_{ij}\to 0\ \text{as}\ R\to\infty.$$
 In view of the continuity of $u_{nn}$ up to $\partial\mathbb{R}^n_+$, we get that $u_{nn}(x)$ tends to a limit $u_{nn}(\infty)$ at infinity.
 On the other hand, by taking $|y|\to\infty$, we have $$\left|u_{nn}(x)-u_{nn}(\infty)\right|\leq \hat{C}\sum\limits_{i+j\leq 2n-1}\left|u_{ij}(x)-u_{ij}(\infty)\right|\leq (n^2-1)\hat{C}\tilde{C}|x|^{-1}\ \text{as}\ |x|\to\infty.$$

 Therefore, there exists $A=\begin{pmatrix} I_{n-1} & a \\ a^{\mathrm{T}} & u_{nn}(\infty) \end{pmatrix}$ with $a=( a^{\star},\cdots, a^{\star})^{\mathrm{T}}\in\mathbb{R}^{n-1}$ such that
 $$D^2u(x)\to A\ \text{as}\ |x|\to\infty$$ and $$\left|D^2u(x)-A\right|\leq C|x|^{-1}\ \text{as}\ |x|\to\infty,$$ where $C$ depends on $n, \lambda, \Lambda$ and $M$. This implies
 $$\left|u(x)-\frac 12 x^{\mathrm{T}}Ax\right|\leq C|x|\ \text{as}\ |x|\to\infty$$ and $$\left.\frac 12 x^{\mathrm{T}}Ax\right|_{\partial\mathbb{R}_+^n}=\frac 12|x^{\prime}|^2.$$
\end{proof}
Once we found the limit $A$ of the Hessian $D^2u(x)$ and determined decay rate of $\left|D^2u(x)-A\right|$ as $|x|\to\infty$, the linear approach to asymptotic expasion of $u(x)$ at infinity can be finished by a standard way (cf. \cite{JIA-LI-LI}) and the finer asymptotic behavior follows from Kelvin transform and Schauder estimate, then Theorem \ref{thm:main} can be completed.

Before showing the details, we prove the following well known lemma, which describes the decay of derivatives.
\begin{lemma}
 \label{le:higher order decay}
 Let $\phi(x)\in C\left(\overline{\mathbb{R}^n_+}\setminus \overline{B_1^+}\right)$ be a viscosity solution to
 \begin{equation*}
  \begin{cases}
  F\left(D^2\phi(x)+A\right)=0\ &\text{in}\ \mathbb{R}^n_+\setminus \overline{B_1^+},\\
  \phi(x)=0                   \ &\text{on}\ \partial\mathbb{R}^n_+,
  \end{cases}
 \end{equation*}
 where $F\in C^{1,1}$ is a fully nonlinear uniformly elliptic operator with ellipticity constants $\lambda$ and $\Lambda$, $A\in\mathcal{S}^n$. Assume that for some constants $C_0>0$ and $\rho<2$,
 $$|\phi(x)|\leq C_0|x|^{\rho},\ x\in\overline{\mathbb{R}^n_+}\setminus \overline{B_1^+}.$$
 Then there exists some constant $r\geq 1$ depending on $n, C_0$ and $\rho$ such that if either $n\geq 3$ with the assumption that $F$ is concave or $n=2$, we have, for some $\tau\in(0,1)$ depending on $n, \lambda$ and $\Lambda$, that $\phi(x)\in C^{3,\tau}\left(\overline{\mathbb{R}^n_+}\setminus\overline{B_r^+}\right)$ and
 $$\left|D^k\phi(x)\right|\leq C|x|^{\rho-k},\ x\in\overline{\mathbb{R}^n_+}\setminus \overline{B_r^+},$$ where $k=0,1,2,3$, $C$ depends on $n$, $\lambda$, $\Lambda$, $C_0$, $\rho$ and the $C^{1,1}$ norm of $F$.
\end{lemma}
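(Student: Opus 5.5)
The plan is a scaling argument combined with interior and boundary regularity estimates, applied on dyadic half-annuli. Fix $x_0\in\overline{\mathbb{R}^n_+}\setminus\overline{B_r^+}$ and set $R=|x_0|/2$, where $r$ is chosen large (say $r\ge 4$) so that $B_{R}(x_0)\cap\overline{\mathbb{R}^n_+}$ stays away from $\overline{B_1^+}$. Define the rescaled function
$$\phi_R(y):=\frac{\phi(x_0+Ry)}{R^2},\qquad y\in B_1\cap\{y_n>-(x_0)_n/R\}.$$
Then $D^2\phi_R(y)=D^2\phi(x_0+Ry)$, so $\phi_R$ solves $F(D^2\phi_R+A)=0$. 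The equation is invariant under this scaling, and $G(N):=F(N+A)$ is uniformly elliptic with the same $\lambda,\Lambda$ and the same $C^{1,1}$ norm. It is concave if $F$ is. Since $\phi=0$ on $\partial\mathbb{R}^n_+$, we also have $\phi_R=0$ on the flat part of the boundary when it meets $B_1$. The growth hypothesis gives $|\phi_R|\le C_0 3^{\rho}R^{\rho-2}$ on $B_1$ (intersected with the domain). Note that $G(0)=F(A)$ need not vanish a priori, but it equals $G(D^2\phi_R)-(G(D^2\phi_R)-G(0))$, so it is controlled. For the estimates I will use the standard normalization obtained by writing $G(D^2\phi_R)-G(0)=-G(0)$, or else the linear-type smallness explained below.

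The key point is that $\rho<2$ makes $\|\phi_R\|_{L^\infty}\le \varepsilon$ small once $R\ge r(n,C_0,\rho)$. This is where $r$ comes from. I would then apply Savin's small-perturbation regularity (flatness implies $C^{2,\tau}$), or alternatively Evans–Krylov interior and Krylov boundary $C^{2,\tau}$ estimates for concave $G$ when $n\ge3$, and the Nirenberg estimate when $n=2$. I distinguish two cases:
\begin{itemize}
\item If $(x_0)_n\ge R/2$, I use interior estimates on $B_{1/4}$.
\item Otherwise, I recenter at the boundary point $((x_0)',0)$ and use the boundary $C^{2,\tau}$ estimate on a half ball, with zero Dirichlet data.
\end{itemize}
This gives $\|\phi_R\|_{C^{2,\tau}}\le C$. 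The delicate point is making the bound proportional to $\|\phi_R\|_{L^\infty}$ rather than merely bounded. For that I plan to pass to the linearization. Since $G(D^2\phi_R)=0$ in the whole region, and since (after noting from the $C^{2,\tau}$ bound that $D^2\phi_R$ is bounded) any two points satisfy the equation, the function $w=\phi_R$ solves a linear equation
$$\bar a_{ij}(y)\,w_{ij}=-G(0),\qquad \bar a_{ij}=\int_0^1 G_{ij}(tD^2\phi_R)\,dt,$$
with $\bar a_{ij}\in C^{\tau}$ by the $C^{2,\tau}$ bound and $F\in C^{1,1}$. To remove the constant $-G(0)$, I instead differentiate, as in the proof of Theorem \ref{thm:limit of Hessian}. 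For each tangential direction $e_k$, $k\le n-1$, the function $\partial_k\phi_R$ satisfies $G_{ij}(D^2\phi_R)(\partial_k\phi_R)_{ij}=0$ with zero boundary data. Schauder estimates (interior and boundary) then give $\|\partial_k\phi_R\|_{C^{2,\tau}}\le C\|\partial_k\phi_R\|_{L^\infty}$. Combined with interpolation, $\|D\phi_R\|\le C\|\phi_R\|_{L^\infty}^{1/2}\|\phi_R\|_{C^2}^{1/2}$, this needs care. A cleaner route is the following: apply Savin's theorem to the normalized function, which directly yields $\|\phi_R\|_{C^{2,\tau}}\le C\|\phi_R\|_{L^\infty}$ provided $G(0)=0$.

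So the genuinely needed step, which I expect to be the main obstacle, is showing $F(A)=0$. I would get this by first obtaining the bounded-$C^{2,\tau}$ estimate above. That estimate shows $D^2\phi$ is bounded and Hölder, and rescaling its Hölder seminorm gives $[D^2\phi]_{C^\tau(B_{R/4}(x_0))}\le CR^{-\tau}$. Hence $\operatorname{osc}D^2\phi\to0$ on annuli. If $D^2\phi$ had a nonzero limit on large balls, $\phi$ would grow quadratically, contradicting $\rho<2$. Thus $D^2\phi\to0$ along suitable sequences, and by continuity $F(A)=0$.

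With $G(0)=0$ in hand, the scaled estimate $\|\phi_R\|_{C^{2,\tau}(B_{1/4})}\le C\|\phi_R\|_{L^\infty}\le CR^{\rho-2}$ gives $|D^k\phi(x_0)|=R^{2-k}|D^k\phi_R(0)|\le CR^{\rho-k}$ for $k=0,1,2$. For $k=3$, I differentiate the equation in all directions in the interior case, and only tangentially in the boundary case. The $C^\tau$ coefficients $G_{ij}(D^2\phi_R)$ together with Schauder estimates give $\|D\phi_R\|_{C^{2,\tau}}\le C\|D\phi_R\|_{L^\infty}\le CR^{\rho-2}$ for the tangential derivatives. The remaining $\partial_{nnn}\phi_R$ is recovered from the differentiated equation by solving for it using $G_{nn}\ge\lambda$. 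Scaling back yields $|D^3\phi|\le C|x|^{\rho-3}$ and the $C^{3,\tau}$ regularity.
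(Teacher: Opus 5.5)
Your overall strategy is the paper's: rescale $\phi$ on a ball of radius comparable to $|x_0|$, apply Evans--Krylov (Nirenberg for $n=2$) in the interior or up to the flat boundary, apply Schauder to the differentiated equation, and scale back. Your interior/boundary case split and your recovery of $\partial_{nnn}\phi_R$ from the differentiated equation via $G_{nn}\ge\lambda$ are correct details that the paper leaves implicit. You are also right that the bound $|D^k\phi_R(0)|\le C\|\phi_R\|_{L^\infty}$ needs $G(0)=F(A)=0$. The paper uses this tacitly; in its application $F(A)=0$ holds because $A=\lim D^2u$ and $F(D^2u)=0$.

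Your argument for $F(A)=0$, however, does not work as written. From $[D^2\phi]_{C^\tau(B_{R/4}(x_0))}\le CR^{-\tau}$ you only get $\operatorname{osc}_{B_{R/4}(x_0)}D^2\phi\le CR^{-\tau}(R/2)^{\tau}\le C$. So the oscillation of $D^2\phi$ over annuli, or over any ball of radius comparable to $|x_0|$, is merely bounded, not tending to zero. The step ``a nonzero limit on large balls'' is therefore never justified. The missing ingredient is the smallness of $\|\phi_R\|_{L^\infty}$, which you can use in either of two ways:
\begin{itemize}
\item \emph{Stability.} Along $x_0=te_n$, $t\to\infty$, the $\phi_R$ are defined on all of $B_1$ and tend to $0$ uniformly. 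By stability of viscosity solutions, $w\equiv0$ solves $F(D^2w+A)=0$, i.e.\ $F(A)=0$.
\item \emph{Interpolation.} The inequality $\|D^2\phi_R\|_{L^\infty(B_{1/8})}\le\varepsilon[D^2\phi_R]_{C^\tau(B_{1/4})}+C_\varepsilon\|\phi_R\|_{L^\infty(B_{1/4})}$, combined with your bounded $C^{2,\tau}$ estimate, gives $D^2\phi(x_0)\to0$. Continuity of $F$ then gives $F(A)=0$.
\end{itemize}

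Once $G(0)=0$, the Evans--Krylov estimate in the form $\|w\|_{C^{2,\tau}}\le C(\|w\|_{L^\infty}+|G(0)|)$, and its boundary version, is already linear in $\|w\|_{L^\infty}$. So your detours through the linearization with right-hand side $-G(0)$ and through Savin's theorem are unnecessary. Savin's theorem is in any case an interior result and would need a boundary counterpart.
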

\begin{proof}
 Let $x_0$ be any point in $\overline{\mathbb{R}^n_+}$ with $|x_0|=R>2$. Define $\Omega_i:=\left\{y\in B_i\mid x_0+\frac R4 y\in\mathbb{R}^n_+\right\}$ and $$\phi_R(y):=\left(\frac 4R\right)^2\phi\left(x_0+\frac R4 y\right),\ y\in\Omega_1.$$
 Then there exists some $r\geq 1$ depending on $n, C_0$ and $\rho$ such that for all $x_0$ with $|x_0|\geq r$, $$\left|\phi_R(y)\right|\leq C_0\left(\frac 4R\right)^2\left|x_0+\frac R4 y\right|^{\rho}\leq 1,\ y\in\Omega_1,$$ and $\phi_R(y)$ satisfies
 \begin{equation*}
  \begin{cases}
  F\left(D^2\phi_R(y)+A\right)=0\ &\text{in}\ \Omega_1,\\
  \phi_R(y)=0                   \ &\text{on}\ \partial\Omega_1\cap\partial\mathbb{R}^n_+.
  \end{cases}
 \end{equation*}
 By the Nirenberg estimate (for $n=2$) or Evans-Krylov estimate (for $n\geq 3$) and Schauder estimates, we get that for some $\tau\in(0,1)$, $\phi_R\in C^{3,\tau}\left(\Omega_{1/2}\right)$ and $$\left|D^k\phi_R(0)\right|\leq C_1\|\phi_R\|_{L^{\infty}(\Omega_1)}\leq C|x_0|^{\rho-2},\ k=0,1,2,3,$$
 where $\tau$ depends on $n, \lambda$ and $\Lambda$, $C_1$ depends on $n$, $\lambda$, $\Lambda$, $\rho$ and the $C^{1,1}$ norm of $F$, $C$ depends on $n$, $\lambda$, $\Lambda$, $C_0$, $\rho$ and the $C^{1,1}$ norm of $F$. Hence for all $x_0$ with $|x_0|\geq r$, $$\left|D^k\phi(x_0)\right|\leq C|x_0|^{\rho-k}$$ and the proof can be closed by the arbitrarity of $x_0$.
\end{proof}
Now we complete the proof of Theorem \ref{thm:main}
\begin{proof}[Proof of Theorem \ref{thm:main}.]
 Let $$\varphi(x)=u(x)-\frac 12 x^{\mathrm{T}}Ax.$$ We assume, without loss of generality, that $$F_{ij}(A)=\delta_{ij}.$$ In fact, there exists a invertible matrix $Q\in\mathbb{R}^{n\times n}$ such that $[F_{ij}(A)]=(Q^{-1})^{\mathrm{T}}(Q^{-1})$ since $[F_{ij}(A)]$ is positive definite and the assumption can be done after the coordinate transformation $\tilde{x}=Qx$. There are three steps to finish the proof.

 \textit{Step 1. Determining the linear term.}

 Based on Theorem \ref{thm:limit of Hessian} and Lemma \ref{le:higher order decay}, we have that $\varphi(x)$ satisfies
 \begin{numcases}{}
  $$F(D^2\varphi(x)+A)=0$$\ &in\ $\mathbb{R}^n_+\setminus\overline{B_r}^+$,\label{eq:F(D^2varphi+A)}\\
  $$\varphi(x)=0$$        \ &on\ $\partial\mathbb{R}^n_+$\notag
 \end{numcases}
 and
 $$|\varphi(x)|=O\left(|x|\right),\ |D\varphi(x)|=O\left(1\right)\ \text{and}\ \left|D^2\varphi(x)\right|=O\left(|x|^{-1}\right)\ \text{in}\ \overline{\mathbb{R}^n_+}\setminus\overline{B_r^+},$$ where $r\geq 1$ depends on $n,\lambda,\Lambda,M$ and the $C^{1,1}$ norm of $F$.
 Differentiating equation (\ref{eq:F(D^2varphi+A)}) with respect to $x_k$ for $k=1,2,\cdots,n$, $\varphi_{x_k}(x)$ satisfies $$\bar{a}_{ij}(x)\left(\varphi_{x_k}\right)_{ij}(x)=0\ \text{in}\ \mathbb{R}^n_+\setminus\overline{B_r^+},$$
 where $\bar{a}_{ij}(x)=F_{ij}\left(D^2\varphi(x)+A\right)$ is uniformly elliptic and $$\big|\bar{a}_{ij}(x)-\delta_{ij}\big|=O(|x|^{-1})\ \text{in}\ \overline{\mathbb{R}^n_+}\setminus\overline{B_r^+}.$$

 Noticing $\varphi_{x_k}(x)=0$ on $\partial\mathbb{R}^n_+\cap\{|x^{\prime}|>r\}$ for $k=1,2,\cdots,n-1$, we apply \cite[Theorem 3.4]{JIA-LI-LI} to $\varphi_{x_k}(x)$ to see that
 $$|\varphi_{x_k}(x)|\leq C\frac {x_n}{|x|^n}\ \text{in}\ \overline{\mathbb{R}^n_+}\setminus \overline{B_{R_0}^+},$$ which leads to $$|\varphi_{kn}(x^{\prime},0)|\leq C\frac 1{|x^{\prime}|^n}\ \text{with}\ |x^{\prime}|\geq R_0,$$ where $C$ and $R_0\geq r$ depend on $n,\lambda,\Lambda,M$ and the $C^{1,1}$ norm of $F$.
 Therefore, there exists some constant $b_n$ such that $$\varphi_{x_n}(x^{\prime},0)\to b_n\ \text{as}\ |x^{\prime}|\to\infty$$
 and \cite[Theorem 3.3]{JIA-LI-LI} asserts $$\varphi_{x_n}(x)\to b_n\ \text{as}\ |x|\to\infty.$$ Then $$\varphi(x)\to b_n x_n\ \text{as}\ |x|\to\infty.$$

 Next we estimate the decay rate of $\left|\varphi(x)-b_n x_n\right|$ as $|x|\to\infty$. Let $$\tilde{\varphi}(x)=\varphi(x)-b_n x_n=u(x)-\left(\frac 12x^{\mathrm{T}}Ax+b_n x_n\right).$$
 Obviously, $\tilde{\varphi}(x)$ satisfies equation (\ref{eq:F(D^2varphi+A)}) in $\mathbb{R}^n_+\setminus\overline{B_{R_0}^+}$ and $|D\tilde{\varphi}(x)|\to 0$ as $|x|\to\infty$. Combining with $F(A)=0$, we deduce that $\tilde{\varphi}(x)$ also satisfies the Dirichlet problem
 \begin{equation}\label{eq:tilde a_ij}
  \begin{cases}
   \tilde{a}_{ij}(x)\tilde{\varphi}_{ij}(x)=0\ &\text{in}\ \mathbb{R}^n_+\setminus\overline{B_{R_0}^+},\\
   \tilde{\varphi}(x)=0                      \ &\text{on}\ \partial\mathbb{R}^n_+\cap\{|x^{\prime}|>R_0\},
  \end{cases}
 \end{equation}
 where $\tilde{a}_{ij}(x)=\int_0^1 F_{ij}\left(tD^2\tilde{\varphi}(x)+A\right) \mathrm{d}t$ is uniformly elliptic and $$\left|\tilde{a}_{ij}(x)-\delta_{ij}\right|=O(|x|^{-1})\ \text{in}\ \overline{\mathbb{R}^n_+}\setminus \overline{B_{R_0}^+}.$$
 We employ \cite[Theorem 3.4]{JIA-LI-LI} again to get $$|\tilde{\varphi}(x)|\leq C\frac {x_n}{|x|^n},\ x\in\overline{\mathbb{R}^n_+}\setminus\overline{B_{R_1}^+},$$ where $C$ and $R_1\geq R_0$ depend on $n,\lambda,\Lambda,M$ and the $C^{1,1}$ norm of $F$.

 \textit{Step 2. Finer asymptotic behaviour at infinity.}

 By the decay of $|\tilde{\varphi}(x)|$ estimated in \textit{Step 1} and in view of Lemma \ref{le:higher order decay}, we know that for some $R_2\geq R_1$ depending on $n, \lambda, \Lambda, M$ and the $C^{1,1}$ norm of $F$,
 $$|\tilde{\varphi}(x)|=O\left(\frac {x_n}{|x|^n}\right),\ |D\tilde{\varphi}(x)|=O(|x|^{-n})\ \text{and}\ \left|D^2\tilde{\varphi}(x)\right|=O\left(|x|^{-1-n}\right)\ \text{in}\ \overline{\mathbb{R}^n_+}\setminus \overline{B_{R_2}^+}.$$
 Since $\tilde{\varphi}(x)$ satisfies the Dirichlet problem (\ref{eq:tilde a_ij}) and $$|\tilde{a}_{ij}(x)-\delta_{ij}|=O(|x|^{-1-n})\ \text{in}\ \overline{\mathbb{R}^n_+}\setminus\overline{B_{R_2}^+},$$
 we see that for all $x\in\mathbb{R}^n_+\setminus\overline{B_{R_2}^+}$,
 $$\Delta\tilde{\varphi}(x)=(\delta_{ij}-\tilde{a}_{ij}(x))\tilde{\varphi}_{ij}(x)=:f(x)=O(|x|^{-1-n}|x|^{-1-n})=O(|x|^{-2-2n}).$$

 Let $\psi(x)$ be the Kelvin transform of $\tilde{\varphi}(x)$, that is,
 $$\psi(x)=|x|^{2-n}\tilde{\varphi}\left(\frac x{|x|^2}\right),\ x\in B_{1/R_2}^+.$$
 It is clear $$\psi(x)=O(|x|^{2-n}|x|^{-1+n})=O(|x|),\ x\in B_{1/R_2}^+$$ and that
 \begin{equation*}
  \begin{cases}
   \Delta\psi(x)=|x|^{-2-n}f\left(\frac x{|x|^2}\right)=:\tilde{f}(x)\ &\text{in}\ B_{1/R_2}^+,\\
   \psi(x)=0                    \ &\text{on}\ \partial B_{1/R_2}^+\cap\{x_n=0\}.
  \end{cases}
 \end{equation*}
 Then $\tilde{f}(x)=O\left(|x|^{-2-n}|x|^{2+2n}\right)=O(|x|^n)$ and $\tilde{f}\in C^{\alpha}\left(B_{1/R_2}^+\right)$ for any $\alpha\in(0,1)$, hence $\psi(x)\in C^{2,\alpha}\left(\overline{B_{1/(2R_2)}^+}\right)$.
 So there exists some matrix $\tilde{A}\in\mathbb{R}^{n\times n}, \tilde{b}\in\mathbb{R}^n$ and $\tilde{c}\in\mathbb{R}$ such that for some constant $C>0$,
 $$\left|\psi(x)-\left(\frac 12 x^{\mathrm{T}}\tilde{A}x+\tilde{b}\cdot x+\tilde{c}\right)\right|\leq C|x|^{2+\alpha},\ x\in \overline{B_{1/(2R_2)}^+}.$$
 In view of $\psi(0)=0, \psi_{x_k}(0)=0$ for $k=1,2,\cdots,n-1$ and $\Delta\psi(0)=0$, we deduce $\tilde{c}=0, \tilde{b}=\left(0,\cdots,0,d_n\right)^{\mathrm{T}}$ and $\mathrm{tr}\tilde{A}=0$. We go back to exterior domain to obtain
 $$\left|\tilde{\varphi}(x)-\left(\frac 12 \frac{x^{\mathrm{T}}\tilde{A}x}{|x|^{n+2}}+d_n\frac{x_n}{|x|^n}\right)\right|\leq C|x|^{-\alpha-n},\ x\in\overline{\mathbb{R}^n_+}\setminus\overline{B_{2R_2}^+},$$
 namely, $$u=\frac 12 x^{\mathrm{T}}Ax+b_n x_n+d_n\frac{x_n}{|x|^n}+\frac 12 \frac{x^{\mathrm{T}}\tilde{A}x}{|x|^{n+2}}+O\left(|x|^{-\alpha-n}\right),\ x\in\overline{\mathbb{R}^n_+}\setminus\overline{B_{2R_2}^+}.$$

 \textit{Step 3. Higher order estimates of the error.}

 Furthermore, suppose $F$ is smooth. Let $$\hat{\varphi}(x)=u(x)-\left(\frac 12 x^{\mathrm{T}}Ax+b_n x_n+d_n \frac{x_n}{|x|^n}+\frac 12 \frac{x^{\mathrm{T}}\tilde{A}x}{|x|^{n+2}}\right).$$
 Then the Schauder estimate asserts for all $k\in\mathbb{N}$, $$\left|D^k\hat{\varphi}(x)\right|\leq C(k)|x|^{-\alpha-n-k}.$$
\end{proof}

\begin{proof}[\textit{Proof of Corollary \ref{cor:Liouville of F}}]
 Theorem \ref{thm:main} states that there exists some $A\in\mathcal{S}^n$, $b\in\mathbb{R}^n$ and $c\in\mathbb{R}$ such that
 $$E(x):=u(x)-\left(\frac 12 x^{\mathrm{T}}Ax+b\cdot x+c\right)\to 0\ \text{as}\ |x|\to\infty$$ and $$E(x)=0\ \text{on}\ \partial\mathbb{R}^n_+.$$
 It follows from $F(D^2E(x)+A)=0$ and $F(A)=0$ that $$a_{ij}(x)E_{ij}(x)=0,\  x\in\mathbb{R}^n_+,$$
 where $a_{ij}(x)=\int_0^1F_{ij}(tD^2E(x)+A)\ \mathrm{d}t$ is uniformly elliptic. Then the maximum principle leads to $E(x)\equiv 0$, namely, $$u(x)=\frac 12 x^{\mathrm{T}}Ax+b\cdot x+c\ \text{in}\ \overline{\mathbb{R}^n_+}.$$
\end{proof}

\section{$K$-quasiconformal mappings in half planes and Theorem \ref{thm:main} for dimension 2}

For the case $n=2$ in Theorem \ref{thm:main}, it can be proved by $K$-quasiconformal mappings. Let us recall the definition of $K$-quasiconformal mappings.
\begin{defn}\label{def:q-c}
 A continuously differentiable mapping $w(x)=(p(x),q(x))$ from a domain $\Omega$ in the $x=(x_1,x_2)$ plane to the $w=(p,q)$ plane is $K$-quasiconformal in $\Omega$ if for some constant $K>0$ we have
 $$p^2_1+p^2_2+q^2_1+q^2_2\leq 2K\left(p_1q_2-p_2q_1\right)$$ for all $x\in\Omega$, where $p_i=\frac{\partial}{\partial x_i}p(x), q_i=\frac{\partial}{\partial x_i}q(x), i=1,2$.
\end{defn}
\begin{re}
The domain $\Omega\subset\mathbb{R}^2$ in Definition \ref{def:q-c} can be either bounded or unbounded. If $\Omega=\mathbb{R}^2\setminus\overline{D}$\  ($D\subset\mathbb{R}^2$ is bounded), we say that $w$ is exterior $K$-quasiconformal in $\Omega$.\\
\end{re}

In order to get H\"{o}lder estimate for exterior $K$-quasiconformal mappings in exterior domains in half planes, by Kelvin transform approach developed in \cite{LI-LIU}, it suffices to show that $K$-quasiconformal mappings are H\"{o}lder continuous in half domains. Particularly, the continuity of $K$-quasiconformal mappings at origin can deduce asymptotic behavior of exterior $K$-quasiconformal mappings at infinity.
\begin{thm}\label{thm:Holder of q-c}
 Let $R>0$ and $w=(p,q)$ be $K$-quasiconformal in $B_R^+\subset\mathbb{R}^2_+$ with $K\geq 1$. Assume that $w$ is continuous up to $\left\{|x_1|<R, x_2=0\right\}\setminus\{0\}$, $p=0$ on $\left\{|x_1|<R, x_2=0\right\}\setminus\{0\}$ and $|w|\leq M$ in $B_R^+$.
 Then $w$ is H\"{o}lder continuous in $B_{R^{\prime}}^+\cup\{|x_1|<R^{\prime},x_2=0\}$ for $0<R^{\prime}<R$, and
 \begin{equation*}
  |w(x)-w(y)|\leq C|x-y|^{\alpha},\ x,y\in B_{R^{\prime}}^+\cup\{|x_1|<R^{\prime},x_2=0\},
 \end{equation*}
 where $\alpha=K-(K^2-1)^{\frac 12}$, $C$ depends on $K, R, R^{\prime}$ and $M$.
\end{thm}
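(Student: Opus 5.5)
The plan is to reduce to the classical interior Hölder estimate for $K$-quasiconformal mappings (Morrey's / Finn–Serrin's theorem with exponent $\alpha=K-(K^2-1)^{1/2}$) by an odd/even reflection across the real axis, and then to handle the isolated boundary point $0$ separately. Extend $w$ to $B_R$ by
$$\tilde p(x_1,x_2)=-p(x_1,-x_2),\qquad \tilde q(x_1,x_2)=q(x_1,-x_2)\quad (x_2<0).$$
At a point of the lower half disc, $\tilde p_1=-p_1$, $\tilde p_2=p_2$, $\tilde q_1=q_1$, $\tilde q_2=-q_2$, evaluated at the reflected point. Hence the sum of squares is unchanged, and the Jacobian satisfies $\tilde p_1\tilde q_2-\tilde p_2\tilde q_1=p_1q_2-p_2q_1$. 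So the inequality of Definition \ref{def:q-c} holds in $B_R^+$ and in $B_R^-$ with the same $K$. Since $p=0$ on the axis, $\tilde w$ is continuous across $\{x_2=0\}\setminus\{0\}$, with $|\tilde w|\le M$.

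The first key step is to show that $\tilde w$ is $K$-quasiconformal in the full punctured disc $B_R\setminus\{0\}$. The map is only continuous, not $C^1$, across the segment, so I would use the analytic (Sobolev) definition. The reflected map lies in $W^{1,2}_{loc}$ away from $0$: it is continuous across the segment, and its Dirichlet energy is bounded by $2K$ times the area of the image, which is locally finite since $|w|\le M$. The distortion inequality then holds a.e. Morrey's Hölder estimate for $W^{1,2}$ quasiconformal maps uses only this a.e. inequality together with the Dirichlet integral bound on discs
$$D(r)=\int_{B_r(x_0)}|\nabla \tilde w|^2\le C\Bigl(\frac{r}{r_0}\Bigr)^{2\alpha}D(r_0).$$
This gives local Hölder continuity with exponent $\alpha$ on compact subsets of $B_R\setminus\{0\}$.

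The main obstacle is the puncture at $0$, where no continuity is assumed. I would remove it by proving that the singularity is removable for bounded quasiconformal maps. The plan is to rerun Morrey's isoperimetric argument on annuli $A_{\rho,r}=B_r\setminus \overline{B_\rho}$. Let $\rho\to0$ using $|\tilde w|\le M$, and select good radii $\rho$ on which the oscillation $\int_{\partial B_\rho}|\partial_\theta\tilde w|\,d\theta$ stays small. Via the coarea/Cauchy–Schwarz argument, the image area over $\partial B_\rho$ then tends to $0$, giving the differential inequality $D(r)\le \frac{r}{2\alpha}D'(r)$ down to $r=0$. An alternative route is that a point has zero capacity in the plane, so a bounded $W^{1,2}_{loc}(B_R\setminus\{0\})$ function with finite energy near $0$ lies in $W^{1,2}(B_{R'})$. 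Finite energy follows from $\int|\nabla\tilde w|^2\le 2K|\tilde w(B_{R'}\setminus\{0\})|\le 2K\pi M^2$, which counts multiplicity; a degree argument bounds the multiplicity for sense-preserving quasiregular maps. Either way the Dirichlet growth estimate holds at every center in $B_{R'}$.

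Morrey's lemma then converts $D(r)\le Cr^{2\alpha}$ into $|\tilde w(x)-\tilde w(y)|\le C|x-y|^{\alpha}$ on $B_{R'}$, with $C$ depending on $K$, $R$, $R'$ and $M$ through $D(R)\le 2K\pi M^2$ and the distance $R-R'$. Restricting to $B_{R'}^+\cup\{|x_1|<R',x_2=0\}$ gives the statement, including a continuous extension of $w$ to the origin.
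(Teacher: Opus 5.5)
There is a genuine gap in your self-contained argument, at the finite-energy step. Your overall route is the paper's: the same odd/even reflection, after which the paper simply cites Finn--Serrin \cite[Theorem 3]{F-S} for bounded $K$-quasiconformal maps with an isolated singularity, while you try to reprove that result. Your inequality $D(r)\le \frac{r}{2\alpha}D'(r)$ is correct and does give exactly the exponent $\alpha$.

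The inequality $\int|\nabla\tilde w|^2\le 2K\int J_{\tilde w}$ bounds the Dirichlet integral by the area of the image \emph{counted with multiplicity}. The bound $|\tilde w|\le M$ gives no control on multiplicity. For example, let $g$ be an infinite Blaschke product with real zeros and set $w=(-\mathrm{Im}\,g,\mathrm{Re}\,g)$, so $p+iq=ig$. This $w$ satisfies every hypothesis of the theorem with $R=1$, $K=1$, $M=1$. Yet its Dirichlet integral on $B_1^+$ is infinite, since $g$ takes almost every value infinitely often. So $D(R)\le 2K\pi M^2$ is false.

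Your ``degree argument'' is circular. Multiplicity is finite only on subdomains compactly contained in a region where the map is already known to be quasiregular, and that is exactly what is unknown near the segment and near $0$. Your first route has the same problem: it needs finite energy near $0$ before you can select good radii with $\int_{\partial B_\rho}|\partial_\theta\tilde w|\,d\theta\to0$. Consequently three things are not established: $\tilde w\in W^{1,2}_{loc}$ across $\{x_2=0\}$, finite energy at $0$, and the dependence of $C$ on $K,R,R',M$.

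The puncture and the constant can be repaired by using the null-Lagrangian structure instead of image area. Since $J=\partial_1(pq_2)-\partial_2(pq_1)$, for Lipschitz $\eta$ compactly supported in $B_R\setminus\{0\}$ you get the Caccioppoli bound $\int\eta^2|\nabla\tilde w|^2\le 16K^2M^2\int|\nabla\eta|^2$. The inner part of the cutoff, linear between $\partial B_\rho$ and $\partial B_{2\rho}$, contributes $\int|\nabla\eta|^2=3\pi$ for every $\rho$. This yields $\int_{B_{R'}\setminus\{0\}}|\nabla\tilde w|^2\le C(K,M,R,R')$. Then $0$ is removable in $W^{1,2}$ and your Dirichlet-growth argument runs at every center. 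Alternatively, cite \cite[Theorem 3]{F-S} as the paper does.

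This repair still presupposes $\tilde w\in W^{1,2}_{loc}$ across the segment. When $w$ is merely continuous up to $\{x_2=0\}$, that needs a real reflection argument. One option is Stoilow factorization $\tilde w=h\circ\phi$:
\begin{enumerate}
\item Take $\phi$ to be the normalized quasiconformal solution of the Beltrami equation with the reflection-symmetric coefficient of $\tilde w$. Then $\phi(\bar z)=\overline{\phi(z)}$ and $\phi$ preserves $\mathbb{R}$.
\item The map $h$ is holomorphic on $\phi(B_R^+)$, continuous up to a real segment, with $\mathrm{Re}\,h=0$ there.
\item Schwarz reflection then gives $\tilde w\in W^{1,2}_{loc}(B_R\setminus\{0\})$.
\end{enumerate}

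The paper passes over this point too. Its reflected map is in general not $C^1$ across $x_2=0$, so it is not $K$-quasiconformal in the sense of Definition \ref{def:q-c} as claimed. In the paper's actual application, Corollary \ref{cor:ext grad est 2d}, $w=(u_{x_2},u_{x_1})$ with $u\in C^2$ up to the flat boundary. There finite energy near the segment is automatic, and only the puncture produced by the Kelvin transform needs \cite[Theorem 3]{F-S}.
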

\begin{re}
The results in Theorem \ref{thm:Holder of q-c} are also valid for $p,q\in W^{1,2}_{loc}\left(B_R^+\right)\cap L^{\infty}\left(B_R^+\right).$
\end{re}
\begin{proof}[\textit{Proof of Theorem \ref{thm:Holder of q-c}.}]
 We make a reflection across $x_2=0$ so that $$p(x_1,-x_2)=-p(x_1,x_2),\ q(x_1,-x_2)=q(x_1,x_2)$$ in the extended $x$ plane. Then $w$ is $K$-quasiconformal in $B_R\setminus\{0\}$.
 Thanks to the interior H\"{o}lder estimate with interior isolated singularities given by Finn and Serrin \cite[Theorem 3]{F-S}, we have that for $\alpha=K-\left(K^2-1\right)^{\frac12}$ and any $0<R^{\prime}<R$, $$\left|w(x)-w(y)\right|\leq C|x-y|^{\alpha},\ x,y\in B_{R^{\prime}},$$ where $C$ depends on $K, R, R^{\prime}$ and $M$.
 Thus $$\left|w(x)-w(y)\right|\leq C|x-y|^{\alpha},\ x,y\in B_{R^{\prime}}^+\cup \{|x_1|<R^{\prime}, x_2=0\}.$$
\end{proof}
By Kelvin transform, we have H\"{o}lder estimate and asymptotic behavior at infinity for exterior $K$-quasiconformal mappings.
\begin{thm}\label{thm:Holder of ext q-c}
 Let $R>0$ and $w=(p,q)$ be exterior $K$-quasiconformal in $\mathbb{R}^2_+\setminus\overline{B_R^+}$ with $K\geq 1$. Assume that $w$ is continuous up to $\left\{|x_1|>R, x_2=0\right\}$, $q=0$ on $\left\{|x_1|>R, x_2=0\right\}$ and $|w|\leq M$ in $\mathbb{R}^2_+\setminus\overline{B_R^+}$.
 Then $w$ is H\"{o}lder continuous in $\overline{\mathbb{R}^2_+}\setminus\overline{B_{R^{\prime}}^+}$ for $R^{\prime}>R$, namely
 \begin{equation*}
  |w(x)-w(y)|\leq C|x-y|^{\alpha},\ x,y\in \overline{\mathbb{R}^2_+}\setminus\overline{B_{R^{\prime}}^+}
 \end{equation*}
 and $w(x)$ tends to a limit $w(\infty)=\left(p(\infty),0\right)$ at infinity with
 $$\left|w(x)-w(\infty)\right|\leq C|x|^{-\alpha}\ \text{as}\ |x|\to\infty,$$
 where $\alpha=K-(K^2-1)^{\frac 12}$, $C$ depends on $K, R, R^{\prime}$ and $M$.
\end{thm}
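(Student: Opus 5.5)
We reduce the statement to Theorem \ref{thm:Holder of q-c} by a Kelvin-type inversion, following the approach of \cite{LI-LIU}. Consider the inversion $x\mapsto x^{*}=x/|x|^{2}$. Because it preserves $\mathbb{R}^2_+$, it carries $\mathbb{R}^2_+\setminus\overline{B_R^+}$ onto $B_{1/R}^+\setminus\{0\}$ and the boundary piece $\{|x_1|>R,x_2=0\}$ onto $\{0<|x_1|<1/R,x_2=0\}$.

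The map $x\mapsto x/|x|^2$ is anticonformal in the plane. To restore orientation we compose it with the reflection $x_1\mapsto -x_1$, which also preserves the half plane. We therefore set
\[
\sigma(x)=\frac{(-x_1,x_2)}{|x|^2},\qquad \tilde w(x)=w(\sigma(x)),\quad x\in B_{1/R}^+ .
\]
Here $\sigma$ is a conformal (Möbius) self-map of $\mathbb{R}^2_+$. Its Jacobian matrix $D\sigma$ is a positive multiple of a rotation, so the defining inequality of Definition \ref{def:q-c} transforms homogeneously. Indeed, the Dirichlet integrand and the Jacobian determinant of $w$ both get multiplied by $\det D\sigma>0$. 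Hence $\tilde w=(\tilde p,\tilde q)$ is again $K$-quasiconformal in $B_{1/R}^+$ away from the origin, with $|\tilde w|\leq M$.

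The boundary condition concerns $\tilde q=0$ on $\{0<|x_1|<1/R,\ x_2=0\}$, whereas Theorem \ref{thm:Holder of q-c} requires the first component to vanish. We switch components via $\hat w=(\tilde q,-\tilde p)$, which is multiplication by $-i$, a rotation in the target plane. This preserves both $|\nabla \hat w|^2$ and the Jacobian, so $\hat w$ is $K$-quasiconformal and its first component vanishes on the flat boundary minus the origin.

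Theorem \ref{thm:Holder of q-c} is stated for the punctured flat boundary, with an isolated singularity at $0$ handled through Finn--Serrin. Applying it on $B_{1/R}^+$ with $R''=1/R'$ gives
\[
|\hat w(\xi)-\hat w(\eta)|\leq C|\xi-\eta|^\alpha,\qquad \xi,\eta\in B_{1/R'}^+\cup\{|\xi_1|<1/R',\xi_2=0\}.
\]
In particular $\hat w$ extends continuously to $0$, and $\hat w(0)$ has vanishing first component. Setting $w(\infty)=(p(\infty),0)$ with $p(\infty)=-(\text{second component of }\hat w(0))$, we get $|w(x)-w(\infty)|=|\hat w(\sigma(x))-\hat w(0)|\leq C|x|^{-\alpha}$.

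For the Hölder estimate on $\overline{\mathbb{R}^2_+}\setminus\overline{B_{R'}^+}$ we treat two cases.
\begin{itemize}
\item If $|x-y|\geq \frac12\max(|x|,|y|)$, the decay estimate gives $|w(x)-w(y)|\leq C(|x|^{-\alpha}+|y|^{-\alpha})\leq C|x-y|^{\alpha}$ up to constants depending on $R'$. Note that in the region $|x|>R'$ we have $|x|^{-\alpha}\le C\le C|x-y|^\alpha$ once $|x-y|\gtrsim R'$.
\item Otherwise $|x|\sim|y|$. Then $|\sigma(x)-\sigma(y)|=|x-y|/(|x||y|)\leq C|x-y|$ for $|x|,|y|\geq R'$, and the interior estimate for $\hat w$ yields $|w(x)-w(y)|\leq C|x-y|^\alpha$.
\end{itemize}

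The main point needing care is the first one: verifying that the inversion composed with the reflection, and the component swap, genuinely preserve the $K$-quasiconformal inequality with the same $K$. It is also essential to note that Theorem \ref{thm:Holder of q-c} already allows the puncture at the origin, so no a priori continuity at $0$ is needed.
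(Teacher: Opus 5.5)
Your proposal is correct and follows essentially the same route as the paper. Like the paper, you invert $\mathbb{R}^2_+\setminus\overline{B_R^+}$ onto the punctured half disc, apply Theorem \ref{thm:Holder of q-c} (which already allows the puncture at $0$), and transfer the H\"{o}lder bound and the limit at $0$ back to infinity. The differences are bookkeeping. The paper uses the plain inversion $x/|x|^2$, restores orientation by swapping components to $\tilde w=(\tilde q,\tilde p)$, and cites \cite[Lemma 2.5]{LI-LIU}; you instead reflect in the domain and rotate in the target, and you write out the return step. Your case split there is unnecessary, since $|\sigma(x)-\sigma(y)|=|x-y|/(|x||y|)\le |x-y|/(R')^{2}$ for all $|x|,|y|\ge R'$.
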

\begin{proof}
 Let $$\tilde{p}(x)=p\left(\frac{x}{|x|^2}\right)\ \text{and}\ \ \tilde{q}(x)=q\left(\frac{x}{|x|^2}\right)$$ be Kelvin transform to $p$ and $q$ respectively. Then \cite[Lemma 2.5]{LI-LIU} asserts that $\tilde{w}=\left(\tilde{q},\tilde{p}\right)$ is $K$-quasiconformal in $B_{1/R}^+$.
 By the hypotheses of $w$, we see that $\tilde{w}$ is continuous in $B_{1/R}^+$ up to $\{|x_1|<1/R,x_2=0\}\setminus\{0\}$, $\tilde{q}=0$ on $\{|x_1|<1/R,x_2=0\}\setminus\{0\}$ and $|\tilde{w}|\leq M$ in $B_{1/R}^+$.
 It follows from Theorem \ref{thm:Holder of q-c} that for $R^{\prime}>R$,
 $$\left|\tilde{w}(x)-\tilde{w}(y)\right|\leq C\left|x-y\right|^{\alpha},\ x,y\in B_{1/R^{\prime}}^+\cup\{|x_1|<1/R^{\prime},x_2=0\},$$
 and $\tilde{w}(x)$ has a limit $\tilde{w}(0)=\left(0,\tilde{p}(0)\right)$ at $0$ with $$\left|\tilde{w}(x)-\tilde{w}(0)\right|\leq C\left|x\right|^{\alpha},\ x\in B_{1/R^{\prime}}^+\cup\{|x_1|<1/R^{\prime},x_2=0\},$$
 where $\alpha=K-(K^2-1)^{\frac 12}$, $C$ depends on $K, R, R^{\prime}$ and $M$.

 The theorem follows from returning to exterior domain.
\end{proof}
By virtue of Theorem \ref{thm:Holder of ext q-c}, we can establish the gradient H\"{o}lder estimate and asymptotic behavior at infinity for solutions to linear uniformly elliptic equations with zero boundary value in exterior domains in half planes.
\begin{cor}\label{cor:ext grad est 2d}
 Let $n=2$ and $u\in C^2\left(\overline{\mathbb{R}^2_+}\setminus \overline{B_1^+}\right)$ be a solution to the Dirichlet problem (\ref{eq:linear ext domain}).
 Assume that $u$ satisfies linear growth condition (\ref{cond:linear growth}) in $\mathbb{R}^2_+\setminus\overline{B_1^+}$ for some positive constant $\bar{C}$. Then for $R>1$, there exists some constant $\alpha\in(0,1)$ such that
 $$\left|Du(x)-Du(y)\right|\leq C\left|x-y\right|^{\alpha},\  x,y\in\overline{\mathbb{R}^2_+}\setminus\overline{B_R^+}$$
 and $Du(x)$ has a limit $Du(\infty)=(0, u_{x_2}(\infty))$ at infinity with
 \begin{equation*}\label{decay of gradient}
  |Du(x)-Du(\infty)|\leq C|x|^{-\alpha},\ x\in\overline{\mathbb{R}^2_+}\setminus\overline{B_R^+},
 \end{equation*}
where $\alpha$ depends on $\lambda$ and $\Lambda$, $C$ depends on $\lambda, \Lambda, \bar{C}$ and $R$.
\end{cor}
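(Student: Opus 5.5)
The plan is to convert the linear equation into a $K$-quasiconformal system for the gradient, in the classical way of Bers--Nirenberg, and then apply Theorem \ref{thm:Holder of ext q-c}.

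\textbf{Step 1: the gradient is quasiconformal.} Since $a_{11}u_{11}+2a_{12}u_{12}+a_{22}u_{22}=0$ with $\lambda I\le [a_{ij}]\le\Lambda I$, divide by $a_{22}$ to get $u_{22}=-(a_{11}u_{11}+2a_{12}u_{12})/a_{22}$. A direct computation then shows that the pair $(u_1,u_2)$, or more conveniently $w=(p,q)=(u_2,u_1)$ after an orientation fix, satisfies
\[
p_1^2+p_2^2+q_1^2+q_2^2\le 2K(p_1q_2-p_2q_1)
\]
with $K$ depending only on $\lambda,\Lambda$. This is the standard fact that derivatives of solutions of two-dimensional uniformly elliptic nondivergence equations form a $K$-quasiconformal map; here $u\in C^2$ is enough.

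\textbf{Step 2: boundary condition and boundedness.} On $\partial\mathbb{R}^2_+$ we have $u=0$, so $u_{x_1}=0$ there. Hence the component of $w$ that must vanish on the boundary (the component named $q$ in Theorem \ref{thm:Holder of ext q-c}) is $u_{x_1}$, and the ordering of $w$ must be chosen so that this matches, while the quasiconformality keeps the right orientation. If the orientation comes out wrong, I would replace $u_2$ by $-u_2$ and recheck.

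Boundedness of $w$ on $\overline{\mathbb{R}^2_+}\setminus\overline{B_{R_0}^+}$ comes from Theorem \ref{thm:exterior Liouville linear half spaces}, which gives $|Du-b|\le C|x|^{-1}$ and hence $|Du|\le C$ away from $B_1$. That theorem only needs $a_{ij}$ continuous. If continuity of $a_{ij}$ is not assumed, I would instead use a rescaled boundary $C^{1,\alpha}$ estimate (Krylov--Safonov in dimension two) on balls of radius $|x|/4$, together with the linear growth, to get $|Du|\le C$. Continuity of $w$ up to the flat boundary follows from $u\in C^2$ up to the boundary.

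\textbf{Step 3: conclude.} Apply Theorem \ref{thm:Holder of ext q-c} to $w$ in $\mathbb{R}^2_+\setminus\overline{B_{R_0}^+}$ with $1<R_0<R$. This gives the Hölder estimate for $Du$ on $\overline{\mathbb{R}^2_+}\setminus\overline{B_R^+}$. It also gives a limit $w(\infty)$ whose vanishing component is zero, that is, $u_{x_1}(\infty)=0$, so $Du(\infty)=(0,u_{x_2}(\infty))$ with decay rate $|x|^{-\alpha}$, where $\alpha=K-(K^2-1)^{1/2}$ depends only on $\lambda,\Lambda$.

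\textbf{Main obstacle.} The delicate step is the algebra in Step 1. It must produce the quasiconformality inequality with the correct orientation for the particular ordering of components, so that the vanishing component matches the $q=0$ hypothesis of Theorem \ref{thm:Holder of ext q-c}. The Kelvin transform inside that theorem swaps the components, which is why orientation and labeling have to be tracked carefully.
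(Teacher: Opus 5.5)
Your proposal matches the paper's proof: it also takes $w=(p,q)=(u_{x_2},u_{x_1})$, which needs no sign change since $p_1q_2-p_2q_1=u_{12}^2-u_{11}u_{22}\geq 0$ for solutions of the elliptic equation. It then cites Gilbarg--Trudinger Chapter~12 for the quasiconformality inequality with $K=(1+\gamma)/2$, $\gamma\geq\Lambda/\lambda$, uses $q=u_{x_1}=0$ on the flat boundary, and applies Theorem~\ref{thm:Holder of ext q-c}; your justification of the bound on $|w|$ in $\mathbb{R}^2_+\setminus\overline{B_{R_0}^+}$ with $1<R_0<R$ fills in a step the paper only asserts.
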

\begin{re}
The results in Corollary \ref{cor:ext grad est 2d} are also valid for $u\in W^{2,2}\left(\mathbb{R}^2_+\setminus \overline{B_1^+}\right).$
\end{re}
\begin{proof}[\textit{Proof of Corollary \ref{cor:ext grad est 2d}.}]
 Assume without loss of generality that $\lambda=1$. Let $p=u_{x_2},\ q=u_{x_1}$ so that $q=0$ on $\{|x_1|>1,x_2=0\}$. Suppose $\Lambda/\lambda\leq\gamma$ for some $\gamma\geq 1$. Then the uniform ellipticity of $a_{ij}(x)$ and equation in (\ref{eq:linear ext domain}) read (we refer to \cite[Chapter 12]{G-T} for details)
 $$p_1^2+p_2^2+q_1^2+q_2^2\leq (1+\gamma)\left(p_1q_2-p_2q_1\right),\ x\in\mathbb{R}^2_+\setminus\overline{B_1^+}.$$ It is obvious that $w=(p,q)$ is exterior $K$-quasiconformal in $\mathbb{R}^2_+\setminus\overline{B_1^+}$ with $K=(1+\gamma)/2$ and $w$ is continuous up to $\{|x_1|>1,x_2=0\}$. Moreover, $|w|$ is bounded by some constant depending only on $\bar{C}$.
 Then by Theorem \ref{thm:Holder of ext q-c}, we have that for $R>1$,
 $$\left|w(x)-w(y)\right|\leq C\left|x-y\right|^{\alpha},\ x,y\in \overline{\mathbb{R}^2_+}\setminus\overline{B_R^+}$$
 and $w(x)$ has a limit $w(\infty)=(u_{x_2}(\infty),0)$ at infinity with $$\left|w(x)-w(\infty)\right|\leq C\left|x\right|^{-\alpha},\ x\in \overline{\mathbb{R}^2_+}\setminus\overline{B_R^+},$$
 where $\alpha$ depends on $\gamma$, $C$ depends on $\gamma, \bar{C}$ and $R$.
\end{proof}
By Corollary \ref{cor:ext grad est 2d}, we can prove Theorem \ref{thm:main} for case $n=2$. Thanks to the proof of Theorem \ref{thm:main} in Section 3, we only need to determine the asymptotic behavior of the Hessian at infinity.
\begin{thm}\label{thm:limit of Hessian 2d}
 Let $n=2$ and $u$ be as in Theorem \ref{thm:main}. Then there exists a unique $A\in\mathcal{S}^2$ such that
 $$D^2u(x)\to A\ \text{as}\ |x|\to\infty$$ and $$|D^2u(x)-A|\leq C|x|^{-\alpha}\ \text{as}\ |x|\to\infty,$$ which implies
 $$\left|u(x)-\frac 12 x^{\mathrm{T}}Ax\right|\leq C|x|^{2-\alpha}\ \text{as}\ |x|\to\infty$$ and $$\left.\frac 12 x^{\mathrm{T}}Ax\right|_{\partial\mathbb{R}_+^2}=p(x),$$ where $\alpha\in(0,1)$ depends on $\lambda$ and $\Lambda$, $C$ depends on $\lambda,\Lambda$ and $M$.
\end{thm}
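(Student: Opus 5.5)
The plan is to follow the proof of Theorem \ref{thm:limit of Hessian}, replacing the use of Theorem \ref{thm:exterior Liouville linear half spaces} by Corollary \ref{cor:ext grad est 2d}.

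First, normalize as before. Subtracting an affine function and rescaling, we may assume $p(x)=\frac12 c_0x_1^2$ on $\partial\mathbb{R}^2_+$; after subtracting $\frac12 c_0 x_1^2$ we work with $p=\frac12x_1^2$. (If $c_0=0$ the same argument applies with $v=u_{x_1}$ directly.) Since $F\in C^{1,1}$ and $n=2$, the Nirenberg estimate together with the boundary data gives $u\in C^{2,\tau}\bigl(\overline{\mathbb{R}^2_+}\setminus\overline{B_1^+}\bigr)$. Differentiating in $x_1$ shows that $v:=u_{x_1}-x_1$ solves the linearized Dirichlet problem (\ref{eq:linearized F}), with $a_{ij}=F_{ij}(D^2u)$ uniformly elliptic and Hölder continuous. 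A Schauder step then gives $v\in C^2$. The quadratic growth (\ref{cond:qua growth of F}), combined with the interior and boundary gradient estimates on dyadic annuli (rescaling as in Lemma \ref{le:higher order decay}), yields the linear growth bound (\ref{cond:linear growth of v}) for $v$.

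Next, apply Corollary \ref{cor:ext grad est 2d} to $v$. It gives $\alpha\in(0,1)$, depending only on $\lambda,\Lambda$, and a limit $Dv(\infty)=(0,a^\star)$ with
\[
|Dv(x)-(0,a^\star)|\le C|x|^{-\alpha}
\]
for large $|x|$. Consequently $u_{11}\to1$ and $u_{12}\to a^\star$, each at rate $C|x|^{-\alpha}$.

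For $u_{22}$, reuse the mean-value argument from the proof of Theorem \ref{thm:limit of Hessian}. On annuli $\Omega_R$, we have $\bar a_{ij}(u_{ij}(x)-u_{ij}(y))=0$ with $\bar a_{22}\ge\lambda$, so
\[
|u_{22}(x)-u_{22}(y)|\le \hat C\bigl(|u_{11}(x)-u_{11}(y)|+2|u_{12}(x)-u_{12}(y)|\bigr).
\]
The oscillation of $u_{22}$ therefore tends to $0$, so $u_{22}\to u_{22}(\infty)$. Letting $|y|\to\infty$ gives $|u_{22}(x)-u_{22}(\infty)|\le C|x|^{-\alpha}$. Setting $A=\begin{pmatrix}1&a^\star\\a^\star&u_{22}(\infty)\end{pmatrix}$, which is unique as the limit, we get $|D^2u-A|\le C|x|^{-\alpha}$.

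Finally, integrate along rays from a fixed sphere, or along segments in $\overline{\mathbb{R}^2_+}$ avoiding $B_1^+$. Integrating the Hessian bound twice gives $|Du(x)-Ax-\beta|\le C|x|^{1-\alpha}$ and then $|u-\frac12x^TAx|\le C|x|^{2-\alpha}$, after absorbing the linear term into $C|x|^{2-\alpha}$ since $\alpha<1$. The boundary identity follows because $u=\frac12x_1^2$ on $\partial\mathbb{R}^2_+$ and $A_{11}=1$.

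The main obstacle is verifying the hypotheses of Corollary \ref{cor:ext grad est 2d}: that $v$ is $C^2$ up to the boundary away from $B_1$, and that $|Dv|$ is bounded. Boundedness of $Dv$ comes from rescaled boundary $C^{1}$ estimates applied to $v$ with linear growth, so I expect this step to go through.
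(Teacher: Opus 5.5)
Your proposal is correct and follows the paper's proof essentially step for step. You apply Corollary \ref{cor:ext grad est 2d} to $v=u_{x_1}-x_1$ to get $u_{11}\to 1$ and $u_{12}\to a^\star$ at rate $|x|^{-\alpha}$, then recover $u_{22}$ from the equation via the mean-value argument of Theorem \ref{thm:limit of Hessian}; your additional justifications (linear growth of $v$ and boundedness of $Dv$ from rescaled boundary estimates, and the radial integration giving $|u-\frac12x^{\mathrm{T}}Ax|\le C|x|^{2-\alpha}$) usefully fill in points the paper only asserts, and the slightly garbled normalization sentence is harmless, since after removing the affine part of $p$ one can simply take $v=u_{x_1}-c_0x_1$.
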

\begin{proof}
 According to Nirenberg estimate and Schauder estimate, we see that $u\in C^{3,\tau}\left(\overline{\mathbb{R}^n_+}\setminus \overline{B_1^+}\right)$ for some $\tau\in(0,1)$ depending on $\lambda$ and $\Lambda$.
 As in the proof of Theorem \ref{thm:limit of Hessian}, after proper normalization, $$v(x):=u_{x_1}(x)-x_1$$ satisfies the Dirichlet problem (\ref{eq:linear ext domain}) and it follows from quadratic growth condition (\ref{cond:qua growth of F}) that
 $$|v(x)|\leq \bar{C}\left(1+|x|\right),\ x\in\overline{\mathbb{R}^2_+}\setminus\overline{B_1^+},$$ where $\bar{C}$ depends only on $M$.
 By virtue of Corollary \ref{cor:ext grad est 2d}, we have that $Dv(x)$ tends to a limit $Dv(\infty)=\left(0,v_{x_2}(\infty)\right)$ at infinity and that for some $\alpha\in(0,1)$ depending only on $\lambda$ and $\Lambda$,
 $$\left|Dv(x)-Dv(\infty)\right|\leq C|x|^{-\alpha}\ \text{as}\ |x|\to\infty,$$ where $C$ depends on $\lambda, \Lambda$ and $\bar{C}$.
 Then we can conclude that there exists some constant $u_{12}(\infty)$ such that $$|u_{11}(x)-1|\leq C|x|^{-\alpha}\ \text{and}\ |u_{12}(x)-u_{12}(\infty)|\leq C|x|^{-\alpha}\ \text{as}\ |x|\to\infty.$$
 As for the behavior of $u_{22}$ at infinity, the same way in proof of Theorem \ref{thm:limit of Hessian} asserts that there exists some constant $u_{22}(\infty)$ such that $$|u_{22}(x)-u_{22}(\infty)|\leq C|x|^{-\alpha}\ \text{as}\ |x|\to\infty.$$
 The proof can be closed with
 $A=\begin{pmatrix} 1 & u_{12}(\infty)\\ u_{12}(\infty) & u_{22}(\infty) \end{pmatrix}$.
\end{proof}

\section{applications}
In this section, we apply Theorem \ref{thm:main} to Monge-Amp\`{e}re equations, $k$-Hessian equations and special Lagrangian equations respectively.

For Monge-Amp\`{e}re equations, the asymptotic behavior of convex viscosity solutions at
infinity in exterior domains in half spaces was obtained in \cite{JIA-LI-LI}, which can also be deduced from Pogorelov estimate in half domain in \cite{SA1} and Theorem \ref{thm:main}, i.e. Theorem \ref{thm:M-A eq}.

For $k$-Hessian ($1<k<n$) equations, the Liouville type theorem in exterior domains in half spaces, i.e. Theorem \ref{thm:k-Hessian eq}, can be established by the Pogorelov type estimate in half domains \cite[Theorem 2.1]{JIA-MA} and Theorem \ref{thm:main}. If $k=2$, the results for $n=3$ and $n=4$ can be also deduced from interior Hessian estimates in \cite{Warren-Y1} and \cite{Sh-Y2} respectively.

For special Lagrangian equations with critical and supercritical phases in $\mathbb{R}_+^n\setminus\overline{B_1}^+$, the interior Hessian estimates (cf. \cite{Warren-Y2} for $n=2$ and \cite{Wang-Y} for $n\geq 3$) and quadratic growth condition (\ref{cond:qua growth of s-L}) can deduce a uniform bound of the Hessian in $\mathbb{R}_+^n\setminus\overline{B_R^+}$ for some $R\geq 1$.
Then the convexity of the level set $\left\{\lambda=(\lambda_1,\cdots,\lambda_n)\ |\  \sum_{i=1}^n\arctan\lambda_i=\Theta\right\}$ with $\Theta\geq(n-2)\pi/2$ (cf. \cite[Lemma 2.1]{Y2}) and Evans-Krylov estimate imply the uniform continuity of the Hessian in $\mathbb{R}^n_+\setminus\overline{B_R^+}$. Therefore the Hessian is uniformly bounded in $\overline{\mathbb{R}^n_+}\setminus\overline{B_R^+}$ and the Liouville type theorem, Theorem \ref{thm:s-L eq}, follows from Theorem \ref{thm:main}.

\begin{thm}[\text{\cite[Theorem 1.1]{JIA-LI-LI}}]\label{thm:M-A eq}
 Let $p$ be a quadratic polynomial satisfying $D^2p>0$ and $u\in C\left(\overline{\mathbb{R}^n_+}\setminus \overline{B_1}^+\right)$ be a convex viscosity solution of
 \begin{numcases}{}
  $$\mathrm{det} D^2u(x)=1$$  \ &\text{in}\ $\mathbb{R}^n_+\setminus\overline{B_1}^+$,\label{eq:M-A}\\
  $$u(x)=p(x)$$               \ &\text{on}\ $\partial\mathbb{R}^n_+$.\notag
 \end{numcases}
 Assume that $u$ satisfies
 \begin{equation}
  \mu|x|^2\leq u(x)\leq \mu^{-1}|x|^2\label{cond:qua growth of M-A}
 \end{equation} in $\overline{\mathbb{R}^n_+}\setminus \overline{B_1}^+$ for some $\mu\in\left(0,\frac 12\right]$.
 Then $u\in C^{\infty}\left(\overline{\mathbb{R}^n_+}\setminus \overline{B_R}^+\right)$ for some $R\geq 1$ depending on $n$ and $\mu$ and there exists a unique positive definite $A\in\mathcal{S}^n$ with $\mathrm{det}A=1$, $b\in\mathbb{R}^n, c, d\in \mathbb{R}$ and some matrix $C\in\mathbb{R}^{n\times n}$ such that for any $\alpha\in(0,1)$,
 $$u(x)=\frac 12 x^{\mathrm{T}}Ax+b\cdot x+c+d\frac{x_n}{\left|x^{\mathrm{T}}Ax\right|^{\frac n2}}+\frac 12 \frac{x^{\mathrm{T}}Cx}{\left|x^{\mathrm{T}}Ax\right|^{\frac{n+2}2}}+O_k\left(|x|^{-\alpha-n}\right)\ \text{as}\ |x|\to\infty$$ for all $k\in\mathbb{N}$, where $f(x)=O_k(|x|^n)$ means that $\left|D^kf(x)\right|=O\left(|x|^{n-k}\right)$
 and $$\left.\left(\frac 12 x^{\mathrm{T}}Ax+b\cdot x+c\right)\right|_{\partial\mathbb{R}_+^n}=p(x),\ \left.\left(\frac 12 \frac{x^{\mathrm{T}}Cx}{\left|x^{\mathrm{T}}Ax\right|^{\frac{n+2}2}}+O\left(|x|^{-\alpha-n}\right)\right)\right|_{\partial\mathbb{R}_+^n}=0.$$
\end{thm}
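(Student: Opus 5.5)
The plan is to reduce Theorem \ref{thm:M-A eq} to Theorem \ref{thm:main} by checking its hypotheses for the Monge-Amp\`{e}re operator restricted to a region where the Hessian is controlled. The Monge-Amp\`{e}re operator is not uniformly elliptic globally, so the first and main task is a uniform bound
\[
c_0 I\leq D^2u\leq C_0 I \quad \text{in } \overline{\mathbb{R}^n_+}\setminus\overline{B_R^+}
\]
for some $R\geq 1$ depending on $n$ and $\mu$.

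To get this bound, I will rescale as in Lemma \ref{le:higher order decay}. For $x_0$ with $|x_0|=R_0$ large, set
\[
u_{R_0}(y)=R_0^{-2}u(x_0+R_0y),
\]
which still solves $\det D^2u_{R_0}=1$. By the growth condition (\ref{cond:qua growth of M-A}), $u_{R_0}$ is pinched between $\mu|y+x_0/R_0|^2$ and $\mu^{-1}|y+x_0/R_0|^2$ on a fixed-size half ball. Its boundary data on the flat part, $R_0^{-2}p(x_0+R_0y)$, is a uniformly convex quadratic, since $D^2p>0$.

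There are two cases for the location of the half ball.
\begin{itemize}
\item For half balls touching $\partial\mathbb{R}^n_+$, I will apply Savin's boundary Pogorelov estimate in half domains \cite{SA1}. This uses the quadratic separation from the tangent plane on the flat boundary, which follows from $D^2p>0$ together with the lower bound $\mu|x|^2$.
\item For interior balls, I will use the classical interior Pogorelov estimate, with section control provided by the two-sided quadratic bound.
\end{itemize}
Both cases give $|D^2u_{R_0}|\leq C$ near $y=0$, hence $|D^2u(x_0)|\leq C$. Since $\det D^2u=1$ and $D^2u$ is bounded above, we also get $D^2u\geq c_0 I$. The main obstacle is verifying the hypotheses of the boundary Pogorelov estimate, in particular the section geometry near the flat boundary, uniformly in $R_0$. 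I would carry this out by following the normalization in \cite{JIA-LI-LI}.

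With the two-sided Hessian bound in hand, I will replace $\det$ by a modified operator. Let $F(N)=\log\det N$ on the set $\{c_0I\leq N\leq C_0I\}$, and extend it outside this set to a smooth, concave, uniformly elliptic $F$ on $\mathcal{S}^n$ with constants depending on $c_0$ and $C_0$; a standard extension of this kind exists. Then $u$ solves $F(D^2u)=0$ in $\mathbb{R}^n_+\setminus\overline{B_R}^+$, with $u=p$ on the flat boundary. The growth condition (\ref{cond:qua growth of F}) follows from (\ref{cond:qua growth of M-A}), and concavity covers the case $n\geq3$.

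Two technical points remain before invoking Theorem \ref{thm:main}.
\begin{itemize}
\item The region $\mathbb{R}^n_+\setminus\overline{B_R}^+$ has $R$ in place of $1$; this is handled by scaling.
\item Theorem \ref{thm:main} is stated for data on all of $\partial\mathbb{R}^n_+$, while here only the part $|x'|\geq R$ matters. Its proof uses the boundary data only for $|x'|$ large, so the argument carries over.
\end{itemize}
Since $\det$ is smooth on positive matrices, the regularity $u\in C^\infty$ outside $B_R^+$ follows from Evans-Krylov and bootstrapping of Schauder estimates.

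Theorem \ref{thm:main} then gives the expansion with $Q=[F_{ij}(A)]^{-1}$. Because $\det A=1$ and $F=\log\det$, we have $F_{ij}(A)=(A^{-1})_{ij}$, so $Q=A$. This yields the stated form, with $|x^{\mathrm{T}}Ax|$ in the denominators. Positivity of $A$ follows from $D^2u\geq c_0I$, and the derivative estimates $O_k$ come from the smooth case of Theorem \ref{thm:main}.
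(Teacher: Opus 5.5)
Your proposal follows the paper's route. The paper obtains this theorem in one line, from the half-domain Pogorelov estimate of \cite{SA1} plus Theorem \ref{thm:main}, and your reduction is that argument written out: a uniform two-sided Hessian bound, a uniformly elliptic smooth concave extension of $\log\det$, rescaling, and $Q=[F_{ij}(A)]^{-1}=A$. One caution about the interior case: the two-sided bound $\mu|x|^2\leq u\leq\mu^{-1}|x|^2$ does not by itself give section control, since for instance $\max\left(|x|^2,\,2x_1-\tfrac34\right)$ is pinched this way yet affine on a ball; that step really rests on Caffarelli's strict convexity and Savin's localization with a compactness argument as in \cite{JIA-LI-LI}, which is the part both you and the paper treat as a black box.
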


\begin{thm}\label{thm:k-Hessian eq}
 Let $p$ be a $(k-1)$-convex quadratic polynomial for $1<k<n$ and $u\in C^4\left(\overline{\mathbb{R}^n_+}\setminus \overline{B_1}^+\right)$ be a $k$-convex solution to
 \begin{numcases}{}
 $$\sigma_k\left(\lambda(D^2u(x))\right)=1$$\ &in\ $\mathbb{R}^n_+\setminus \overline{B_1}^+$, \label{eq:k-Hessian}\\
 $$u(x)=p(x)$$\          &on\ $\partial\mathbb{R}^n_+.$\notag
 \end{numcases}
 Assume that in $\overline{\mathbb{R}^n_+}\setminus \overline{B_1}^+$, $u$ satisfies (\ref{cond:qua growth of M-A}) for some $\mu>0$ small and $$\sigma_{k+1}\left(\lambda(D^2u(x))\right)\geq -\eta$$ for some $\eta>0$.
 Then $u\in C^{\infty}\left(\overline{\mathbb{R}^n_+}\setminus \overline{B_R}^+\right)$ for some $R\geq 1$ depending on $n, k$ and $\mu$ and there exists a unique $A\in\mathcal{S}^n, b\in\mathbb{R}^n, c, d\in \mathbb{R}$ and some matrix $C\in\mathbb{R}^{n\times n}$ such that for any $\alpha\in(0,1)$,
 $$u(x)=\frac 12 x^{\mathrm{T}}Ax+b\cdot x+c+d\frac{x_n}{\left|x^{\mathrm{T}}Qx\right|^{\frac n2}}+\frac 12 \frac{x^{\mathrm{T}}Cx}{\left|x^{\mathrm{T}}Qx\right|^{\frac{n+2}2}}+O_l\left(|x|^{-\alpha-n}\right)\ \text{as}\ |x|\to\infty$$ for all $l\in\mathbb{N}$ and $$\left.\left(\frac 12 x^{\mathrm{T}}Ax+b\cdot x+c\right)\right|_{\partial\mathbb{R}_+^n}=p(x),\ \left.\left(\frac 12 \frac{x^{\mathrm{T}}Cx}{\left|x^{\mathrm{T}}Qx\right|^{\frac{n+2}2}}+O\left(|x|^{-\alpha-n}\right)\right)\right|_{\partial\mathbb{R}_+^n}=0,$$
 where $Q=\left[\frac{\partial}{\partial M_{ij}}\sigma_k(\lambda(A))\right]^{-1}$.
\end{thm}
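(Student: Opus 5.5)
The plan is to reduce Theorem \ref{thm:k-Hessian eq} to Theorem \ref{thm:main}. The reduction needs two ingredients. The first is a uniform Hessian bound on $\overline{\mathbb{R}^n_+}\setminus\overline{B_R^+}$, so that the equation becomes uniformly elliptic. The second is some structural property that replaces concavity of $F$, which here is concavity of $\sigma_k^{1/k}$ on the G\aa rding cone.

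\textbf{Step 1: uniform Hessian bound.} Fix $x_0\in\overline{\mathbb{R}^n_+}$ with $|x_0|=R_0$ large, and rescale
$$u_{R_0}(y)=R_0^{-2}u(x_0+R_0y),\qquad y\in\Omega=\{y\in B_{1/2}:x_0+R_0y\in\mathbb{R}^n_+\}.$$
Then $\sigma_k(\lambda(D^2u_{R_0}))=1$, and (\ref{cond:qua growth of M-A}) gives $u_{R_0}$ bounded above and below by constants depending only on $\mu$. On the flat part of $\partial\Omega$ we have $u_{R_0}=R_0^{-2}p(x_0+R_0 y)$, which has Hessian $D^2p$ and is $(k-1)$-convex. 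The rescaling also preserves $\sigma_{k+1}(\lambda(D^2u_{R_0}))\geq-\eta$.
\begin{itemize}
\item If $x_0$ is far from $\partial\mathbb{R}^n_+$ relative to $R_0$, I apply the interior Pogorelov-type estimate for $k$-convex solutions with $\sigma_{k+1}\geq-\eta$ (as in \cite{LI-R-W}, \cite{CH-D}). To use it, I subtract a suitable quadratic so that a sublevel set $\{u_{R_0}<\ell\}$ is compactly contained in the ball, which is possible by the two-sided quadratic growth.
\item Near the boundary, I use the half-domain Pogorelov estimate \cite[Theorem 2.1]{JIA-MA}. It is designed for exactly these hypotheses: $(k-1)$-convex quadratic boundary data, quadratic growth, and weak $(k+1)$-convexity.
\end{itemize}
Either way, $|D^2u_{R_0}(0)|\leq C(n,k,\mu,\eta,p)$. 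Hence $|D^2u|\leq C$ on $\overline{\mathbb{R}^n_+}\setminus\overline{B_R^+}$.

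\textbf{Step 2: uniform ellipticity and concavity.} With $\lambda(D^2u)$ in a compact subset $K$ of the cone $\Gamma_k\cap\{\sigma_k=1\}$, the operator $\sigma_k$ is uniformly elliptic there, with ellipticity constants depending on $K$. I then modify $\sigma_k^{1/k}-1$ outside a neighborhood of $K$ to get a globally uniformly elliptic, concave, smooth operator $F$ that agrees with $\sigma_k^{1/k}-1$ near $K$. One way to do this is to take the infimum of the tangent planes of $\sigma_k^{1/k}$ at points of the compact set. The level-set convexity noted in the remark after Theorem \ref{thm:main} also suffices. Then $u$ is a viscosity solution of $F(D^2u)=0$ in $\mathbb{R}^n_+\setminus\overline{B_R}^+$ with $u=p$ on $\partial\mathbb{R}^n_+$. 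The quadratic growth condition (\ref{cond:qua growth of F}) follows from (\ref{cond:qua growth of M-A}).

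\textbf{Step 3: apply Theorem \ref{thm:main}.} After scaling so that the domain is $\mathbb{R}^n_+\setminus\overline{B_1}^+$, Theorem \ref{thm:main} with smooth $F$ gives the expansion with all derivative bounds $O_l$. Here $Q=[F_{ij}(A)]^{-1}$. This agrees with $[\partial_{M_{ij}}\sigma_k(A)]^{-1}$ up to a positive scalar, since $F=\sigma_k^{1/k}-1$ near $A$ and $\sigma_k(A)=1$. That scalar can be absorbed into $d$ and $C$. Smoothness of $u$ near infinity comes from Evans--Krylov followed by bootstrapping with Schauder estimates.

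The main obstacle is Step 1 near the boundary. The scaled boundary data must satisfy the hypotheses of \cite[Theorem 2.1]{JIA-MA} uniformly in $R_0$, and the scaled subdomains must be handled uniformly. I would first try to localize with a sublevel set of $u_{R_0}-\tilde\ell$, where $\tilde\ell$ is an affine function tangent from below, using the growth bounds with $\mu$.
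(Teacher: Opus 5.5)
Your proposal is correct and is the paper's argument. The paper proves Theorem~\ref{thm:k-Hessian eq} in one line: the half-domain Pogorelov estimate \cite[Theorem 2.1]{JIA-MA} gives a uniform Hessian bound near infinity, hence uniform ellipticity, and Theorem~\ref{thm:main} then applies. Your concave extension of $\sigma_k^{1/k}$ (or the level-set convexity) and your treatment of $Q$, where the factor $k$ is absorbed into $d$ and $C$, only fill in what that line leaves implicit.

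One adjustment to your localization. Use sections $\{u<\ell\}$ with $\ell$ affine: subtracting a genuine quadratic changes the equation, so the cited estimates no longer apply directly. Also, (\ref{cond:qua growth of M-A}) only gives such a section around $x_0$ avoiding $B_1$ at scale $|x_0|$, not one inside $B_{R_0/2}(x_0)$. For example, $\ell(x)=b\cdot x-\mu|b|^2/8$ with $b=2x_0/\mu$ gives $\ell(x_0)-u(x_0)\geq |x_0|^2/(2\mu)$ and $\{u<\ell\}\cap B_{|x_0|/4}=\emptyset$, so the same device handles both your interior and near-boundary cases.
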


\begin{thm}\label{thm:s-L eq}
 Let $(n-2)\pi/2\leq\Theta<n\pi/2$ and $p$ be a quadratic polynomial satisfying $\Theta-\pi/2<\sum_{i=1}^{n-1}\arctan{\lambda_i\left(D^2p(x)\right)}<\Theta+\pi/2$. Let $u$ be a smooth solution to
 \begin{numcases}{}
  $$\sum\limits_{i=1}^n\arctan{\lambda_i\left(D^2u(x)\right)}=\Theta$$\ &in $\mathbb{R}^n_+\setminus\overline{B_1}^+,$\label{eq:s-L}\\
  $$u(x)=p(x)$$\ &on\ $\partial\mathbb{R}^n_+$.\notag
 \end{numcases}
 Assume that for some $\mu>0$,
 \begin{equation}\label{cond:qua growth of s-L}
  |u(x)|\leq \mu\left(1+|x|^2\right)
 \end{equation} in $\overline{\mathbb{R}^n_+}\setminus\overline{B_1}^+$.
 Then there exists a unique $A\in \mathcal{S}^n, b\in\mathbb{R}^n, c,d\in \mathbb{R}$ and some matrix $C\in\mathbb{R}^{n\times n}$ such that for any $\alpha\in(0,1)$,
 $$u(x)=\frac 12 x^{\mathrm{T}}Ax+b\cdot x+c+d\frac{x_n}{\left|x^{\mathrm{T}}Qx\right|^{\frac n2}}+\frac 12 \frac{x^{\mathrm{T}}Cx}{\left|x^{\mathrm{T}}Qx\right|^{\frac{n+2}2}}+O_k\left(|x|^{-\alpha-n}\right)\ \text{as}\ |x|\to\infty$$ for all $k\in\mathbb{N}$
 and $$\left.\left(\frac 12 x^{\mathrm{T}}Ax+b\cdot x+c\right)\right|_{\partial\mathbb{R}_+^n}=p(x),\ \left.\left(\frac 12 \frac{x^{\mathrm{T}}Cx}{\left|x^{\mathrm{T}}Qx\right|^{\frac{n+2}2}}+O\left(|x|^{-\alpha-n}\right)\right)\right|_{\partial\mathbb{R}_+^n}=0,$$ where $Q=I+A^2$.
\end{thm}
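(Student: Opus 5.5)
The statement to prove is Theorem \ref{thm:s-L eq}. The plan is to reduce it to Theorem \ref{thm:main}. Three facts must be supplied. First, outside a large half ball the equation is uniformly elliptic with a concave (or level-set-convex) operator. Second, $u$ has uniformly continuous bounded Hessian there, so the Evans--Krylov/Schauder machinery used in Theorem \ref{thm:limit of Hessian} and Lemma \ref{le:higher order decay} applies. Third, the identification $Q=I+A^2$ holds.

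\emph{Step 1: Hessian bound.} Fix $x_0$ with $|x_0|=R$ large and rescale as in Lemma \ref{le:higher order decay}:
$$u_R(y)=\frac{16}{R^2}\,u\!\left(x_0+\tfrac R4 y\right).$$
The special Lagrangian equation is invariant under this scaling, and by (\ref{cond:qua growth of s-L}) $|u_R|\le C(\mu)$ on the rescaled domain. When $B_{R/4}(x_0)$ lies inside $\mathbb{R}^n_+$, the interior Hessian estimates (\cite{Warren-Y2} for $n=2$, \cite{Wang-Y} for $n\ge3$, valid for $\Theta\ge(n-2)\pi/2$) give $|D^2u_R(0)|\le C$. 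Since the Hessian is unchanged by this scaling, $|D^2u(x_0)|\le C$.

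Near $\partial\mathbb{R}^n_+$ a boundary version is needed. I would handle it with the hypothesis $\Theta-\pi/2<\sum_{i<n}\arctan\lambda_i(D^2p)<\Theta+\pi/2$. This condition guarantees that the tangential Hessian on the boundary admits a normal eigenvalue completing the phase, so a boundary Hessian bound for the Dirichlet problem holds (as in the boundary estimates underlying \cite{JIA-LI-LI}). I expect this boundary Hessian estimate to be the main obstacle. If I cannot cite one directly, I would try a barrier argument for $u_{nn}$ on the flat boundary, combined with the convexity of the level set.

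\emph{Step 2: regularity and application of Theorem \ref{thm:main}.} With $|D^2u|\le K$ on $\overline{\mathbb{R}^n_+}\setminus\overline{B_{R_0}^+}$, set
$$F(N)=\sum_i\arctan\lambda_i(N)-\Theta .$$
Restricted to $\{|N|\le K+1\}$, $F$ is uniformly elliptic, since $\partial F/\partial N=(I+N^2)^{-1}$ has eigenvalues in $[(1+(K+1)^2)^{-1},1]$. I would then modify $F$ outside this set to a globally uniformly elliptic smooth operator $\tilde F$ that preserves the convex level set $\{F=0\}$ for $\Theta\ge(n-2)\pi/2$ (\cite[Lemma 2.1]{Y2}).

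By the Remark after Theorem \ref{thm:main}, convexity of the level set suffices. Evans--Krylov and Schauder then give uniform $C^{2,\tau}$ and higher bounds on the unit-scaled pieces, so $u$ is a solution of $\tilde F(D^2u)=0$ satisfying all hypotheses of Theorem \ref{thm:main} on the exterior of $B_{R_0}^+$. After rescaling $x\mapsto x/R_0$, the domain becomes the exterior of $B_1^+$. Theorem \ref{thm:main}, including its smooth part, then yields the expansion, the boundary identities, and the $O_k$ bounds for all $k$.

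\emph{Step 3: identifying $Q$.} Finally $Q=[F_{ij}(A)]^{-1}$. The differential of $\sum\arctan\lambda_i$ at $A$ is $(I+A^2)^{-1}$, computed by diagonalizing $A$. Hence $Q=I+A^2$. Uniqueness of $A,b,c$ follows as in Theorem \ref{thm:main}.
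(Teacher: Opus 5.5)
Your architecture is the paper's. You use the interior Hessian estimates of \cite{Warren-Y2} and \cite{Wang-Y} together with the growth condition to bound $D^2u$. You then use level-set convexity \cite[Lemma 2.1]{Y2} and Evans--Krylov to pass to a uniformly elliptic operator, and apply Theorem~\ref{thm:main} with $Q=[F_{ij}(A)]^{-1}=I+A^2$. Steps 2 and 3 are fine once a Hessian bound is available. The gap is in Step 1, and it is genuine: you do not establish that bound near $\partial\mathbb{R}^n_+$.

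\emph{Why interior estimates do not suffice.} Rescaled interior estimates give a bound independent of $x_0$ only in a cone $\{(x_0)_n\ge c|x_0|\}$. For $0<(x_0)_n\ll|x_0|$ the admissible radius is $(x_0)_n$. The Wang--Yuan bound at that scale is useful only if $|Du-\xi|\lesssim (x_0)_n$ on $B_{(x_0)_n}(x_0)$ for some fixed vector $\xi$, and that is exactly the $C^{1,1}$ control up to $\partial\mathbb{R}^n_+$ you are trying to prove.

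\emph{Why it cannot be skipped.} Theorem~\ref{thm:main} requires the operator to be uniformly elliptic on the range of $D^2u$ up to the flat boundary. Its proof starts with boundary Evans--Krylov and tangential differentiation up to $\{x_n=0\}$, where $(I+(D^2u)^2)^{-1}$ could otherwise degenerate. You flag this step as open, so the proposal is not yet a proof.

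Neither suggested remedy closes the gap as stated. \cite{JIA-LI-LI} concerns Monge--Amp\`ere, where boundary control comes from Savin-type localization and Pogorelov estimates specific to $\det D^2u$; nothing there transfers to $\sum\arctan\lambda_i$.

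A barrier for $u_{nn}$ alone is also insufficient. On $\{x_n=0\}$, with $\beta=(u_{kn})_{k<n}$, the map
\[
t\longmapsto\sum_{i=1}^n\arctan\lambda_i\begin{pmatrix}D^2_{x'}p&\beta\\ \beta^{\mathrm T}&t\end{pmatrix}
\]
is increasing with range $(S-\frac\pi2,S+\frac\pi2)$, where $S=\sum_{i<n}\arctan\lambda_i(D^2_{x'}p)$, independently of $\beta$. So your hypothesis on $p$ is exactly what makes $u_{nn}$ finite there. However, the value of $u_{nn}$ it determines depends on $\beta$ and is in general unbounded as $|\beta|\to\infty$; for $n=2$ and $0<\Theta<\pi/2$ it grows like $|\beta|^2$.

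You would therefore need three further ingredients, none of which you supply:
\begin{enumerate}
\item A bound $|u_{kn}|\le C$ on the boundary. This requires a barrier for $u_k-p_k$ under the linearized operator $(I+(D^2u)^2)^{-1}$, whose lower ellipticity constant depends on the Hessian bound you are after, so it calls for a structure-specific, Caffarelli--Nirenberg--Spruck type construction.
\item A way to propagate bounds from $\{x_n=0\}$ and $\{x_n\ge c|x|\}$ into the intermediate region. One option is a cut-off maximum principle for a Jacobi-type quantity such as $\ln\sqrt{1+\lambda_{\max}^2}$.
\item Uniformity of all of the above in the scale $R$.
\end{enumerate}
What is missing is a genuine local boundary Hessian estimate for the special Lagrangian Dirichlet problem on half balls with quadratic data.

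The paper's sketch asserts the uniform bound on $\mathbb{R}^n_+\setminus\overline{B_R^+}$ directly from the interior estimates and the growth condition, without invoking the hypothesis on $p$. So it does not supply this ingredient either. Your diagnosis of where that hypothesis must enter is right, but the estimate still has to be proved.
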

The Liouville theorem for special Lagrangian equations in half spaces is a direct corollary of Theorem \ref{thm:s-L eq}.
\begin{cor}\label{cor:s-L eq}
 Let $(n-2)\pi/2\leq\Theta<n\pi/2$ and $p$ satisfy the hypotheses of Theorem \ref{thm:s-L eq}. Let $u$ be a smooth solution to
 \begin{numcases}{}
  $$\sum\limits_{i=1}^n\arctan{\lambda_i\left(D^2u(x)\right)}=\Theta$$\ &in $\mathbb{R}^n_+,$\notag\\
  $$u(x)=p(x)$$\ &on\ $\partial\mathbb{R}^n_+$.\notag
 \end{numcases}
 Assume that $u$ satisfies (\ref{cond:qua growth of s-L}) for some $\mu>0$ in $\overline{\mathbb{R}^n_+}$. Then $u$ must be a quadratic polynomial.
\end{cor}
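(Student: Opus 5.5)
The statement to prove is Corollary \ref{cor:s-L eq}. I will derive it from Theorem \ref{thm:s-L eq}, in the same way Corollary \ref{cor:Liouville of F} is derived from Theorem \ref{thm:main}. Since $u$ is a smooth solution in all of $\mathbb{R}^n_+$ with the quadratic growth (\ref{cond:qua growth of s-L}), its restriction to $\overline{\mathbb{R}^n_+}\setminus\overline{B_1}^+$ satisfies every hypothesis of Theorem \ref{thm:s-L eq}. Hence there are $A\in\mathcal{S}^n$, $b\in\mathbb{R}^n$ and $c\in\mathbb{R}$ such that
$$E(x):=u(x)-\left(\frac 12 x^{\mathrm{T}}Ax+b\cdot x+c\right)=O\left(|x|^{1-n}\right)\to 0\ \text{as}\ |x|\to\infty.$$
Moreover $E=0$ on $\partial\mathbb{R}^n_+$, because $u=p$ there and the quadratic part coincides with $p$ on $\partial\mathbb{R}^n_+$. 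Here $\sum_i\arctan\lambda_i(A)=\Theta$, which follows by letting $|x|\to\infty$ in the equation and using $D^2E\to 0$ (the $O_k$ estimates).

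Next I linearize. Set $G(N)=\sum_i\arctan\lambda_i(N)$. Then $G(D^2E+A)=G(A)$ in $\mathbb{R}^n_+$, and subtracting gives
$$a_{ij}(x)E_{ij}(x)=0,\qquad a_{ij}(x)=\int_0^1G_{ij}\left(tD^2E(x)+A\right)\mathrm{d}t.$$
I need $[a_{ij}]$ to be uniformly elliptic on $\mathbb{R}^n_+$. Since $G_{ij}(N)=\left[(I+N^2)^{-1}\right]_{ij}$, it is positive definite with lower bound $(1+\|N\|^2)^{-1}$. It therefore suffices that $D^2u$ be bounded on all of $\overline{\mathbb{R}^n_+}$, so that $D^2E$ is bounded. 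This holds because $u$ is smooth up to the boundary, hence bounded Hessian on $\overline{B_R^+}$, and $D^2u\to A$ outside $B_R^+$. The matrix $[a_{ij}]$ is then pointwise positive definite with eigenvalues bounded above by $1$ and below by a positive constant, and it is continuous.

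Finally I conclude with the maximum principle on $Q_R^+$, or on $B_R^+$. On the boundary portion lying on $\partial\mathbb{R}^n_+$ we have $E=0$, and on the rest $|E|\le\sup_{|x|\ge R}|E|\to 0$. So $|E|\le o(1)$ in $B_R^+$, and letting $R\to\infty$ gives $E\equiv 0$. Thus $u$ is a quadratic polynomial.

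The only delicate point is the uniform ellipticity on the whole half space, meaning a global Hessian bound including near $\partial\mathbb{R}^n_+\cap B_1$. This is settled by the smoothness of $u$ up to the boundary together with the Hessian convergence at infinity given in Theorem \ref{thm:s-L eq}.
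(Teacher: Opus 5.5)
Your proposal is correct and follows the route the paper intends: it is the argument of Corollary \ref{cor:Liouville of F} run through Theorem \ref{thm:s-L eq}, namely $E\to 0$ at infinity, $E=0$ on $\partial\mathbb{R}^n_+$, linearize, and apply the maximum principle. The ``delicate point'' you flag does not actually need a global Hessian bound or smoothness up to the boundary: the weak maximum principle for $a_{ij}E_{ij}=0$ on the bounded domain $B_R^+$ (no first- or zeroth-order terms) only requires $[a_{ij}]$ to be pointwise positive definite, and $(I+N^2)^{-1}$ always is.
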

\begin{re}
 The quadratic growth conditions (\ref{cond:qua growth of M-A}) and (\ref{cond:qua growth of s-L}) in Theorem \ref{thm:M-A eq}-\ref{thm:s-L eq} and Corollary \ref{cor:s-L eq} are reasonable.

 (1) The nonquadratic convex function $$u(x)=\frac{x_1^2}{2(x_n+1)}+\frac 12\left(x_2^2+\cdots+x_{n-1}^2\right)+\frac 16\left(x_n^3+3x_n^2\right)$$ given by Mooney \cite{M} and Savin \cite{SA2} solves equation (\ref{eq:M-A}) in $\mathbb{R}^n_+$ with quadratic boundary data.

 (2) It was proved by Warren \cite{Warren} that for all $n\geq 2k-1$, there exists nonpolynomial $k$-convex elliptic entire solutions to the equation (\ref{eq:k-Hessian}) in $\mathbb{R}^n$.
 Especially, in $\mathbb{R}^3_+$, the function $$u(x_1,x_2,x_3)=\left(x_1^2+x_2^2\right)e^{x_3}+\frac 14e^{-x_3}-e^{x_3}$$ solves equation (\ref{eq:k-Hessian}) with $k=2$, which is equivalent to equation (\ref{eq:s-L}) with critical phase $\Theta=\pi/2$. Obviously, this function is quadratic on $\partial\mathbb{R}^3_+$, but it is not quadratic in $\mathbb{R}^3_+$.
\end{re}

\bibliographystyle{elsarticle-num}

\begin{thebibliography}{99}
\bibitem{B-C-G-J} Bao, Jiguang; Chen, Jingyi; Guan, Bo; Ji, Min. \textit{Liouville property and regularity of a Hessian quotient equation.} Amer. J. Math. 125 (2003), no. 2, 301–316.
\bibitem{C-LI} Caffarelli, L.; Li, Yanyan. \textit{An extension to a theorem of J\"{o}rgens, Calabi, and Pogorelov.} Comm. Pure Appl. Math. 56 (2003), no. 5, 549-583.
\bibitem{CA} Calabi, Eugenio. \textit{Improper affine hyperspheres of convex type and a generalization of a theorem by K. J\"{o}rgens.} Michigan Math. J. 5 (1958), 105-126.
\bibitem{CH-Y} Chang, Sun-Yung Alice; Yuan, Yu. \textit{A Liouville problem for the sigma-2 equation.} Discrete Contin. Dyn. Syst. 28 (2010), no. 2, 659–664.
\bibitem{CH-X} Chen, Li; Xiang, Ni. \textit{Rigidity theorems for the entire solutions of 2-Hessian equation.} J. Differential Equations 267 (2019), no. 9, 5202–5219.
\bibitem{CH-D} Chu, Jianchun; Dinew, S{\l}awomir. \textit{Liouville theorem for a class of Hessian equations.} arXiv:2306.13825.
\bibitem{Du} Du, Shizhong. \textit{Necessary and sufficient conditions to Bernstein theorem of a Hessian equation.} Trans. Amer. Math. Soc. 375 (2022), no. 7, 4873–4892.
\bibitem{F-F-M} Ferrer, L.; Mart\'{i}nez, A.; Mil$\acute{\rm a}$n, F. \textit{An extension of a theorem by K. J\"{o}rgens and a maximum principle at infinity for parabolic affine spheres.} Math. Z. 230 (1999), no. 3, 471-486.
\bibitem{F-S} Finn, Robert; Serrin, James. \textit{On the H\"{o}lder continuity of quasi-conformal and elliptic mappings.} Trans. Amer. Math. Soc. 89 (1958), 1-15.
\bibitem{G-T} Gilbarg, David; Trudinger, Neil S. \textit{Elliptic partial differential equations of second order.} Reprint of the 1998 edition. Classics in Mathematics. Springer-Verlag, Berlin, 2001.
\bibitem{H-M} Han, Qing; Marchenko, Ilya. \textit{Solutions of the Special Lagrangian Equation near Infinity.} arXiv:2501.04254.
\bibitem{H-W} Han, Qing; Wang, Zhehui. \textit{Solutions of the minimal surface equation and of the Monge-Ampère equation near infinity.} J. Reine Angew. Math. 820 (2025), 51–73.
\bibitem{JIA-LI-LI} Jia, Xiaobiao; Li, Dongsheng; Li, Zhisu. \textit{Asymptotic behavior at infinity of solutions of Monge-Amp\`{e}re equations in half spaces.} J. Differential Equations 269 (2020), no. 1, 326-348.
\bibitem{JIA-MA} Jia, Xiaobiao; Ma, Shanshan. \textit{The Liouville theorem for k-Hessian equations in the half space.} Calc. Var. Partial Differential Equations 64, (2025), no. 6, Paper No. 192, 22pp.
\bibitem{JO} J\"{o}rgens, Konrad. \textit{\"{U}ber die L\"{o}sungen der Differentialgleichung $rt-s^2=1$}. Math. Ann. 127 (1954), 130-134.
\bibitem{LI-LI-Y} Li, Dongsheng; Li, Zhisu; Yuan, Yu. \textit{A Bernstein problem for special Lagrangian equations in exterior domains.} Adv. Math. 361 (2020), 106927, 29pp.
\bibitem{LI-LIANG} Li, Dongsheng; Liang, Lichun. \textit{Liouville type theorems of fully nonlinear elliptic equations in half spaces.} Nonlinear Anal. 233 (2023), no. 113292, 6 pp.
\bibitem{LI-LIU} Li, Dongsheng; Liu, Rulin. \textit{Quasiconformal mappings and a Bernstein type theorem over exterior domain in $\mathbb{R}^2$.} Calc. Var. Partial Differential Equations 63, (2024), no. 8, Paper No. 209, 13pp.
\bibitem{LI-R-W} Li, Ming; Ren, Changyu; Wang, Zhizhang. \textit{An interior estimate for convex solutions and a rigidity theorem.} J. Funct. Anal. 270 (2016), no. 7, 2691–2714.
\bibitem{M} Mooney, Connor. \textit{Monge-Amp\`{e}re equation.} https://www.math.uci.edu/$\sim$mooneycr/.
\bibitem{PO} Pogorelov, A. V. \textit{On the improper convex affine hyperspheres.} Geometriae Dedicata 1 (1972), no. 1, 33-46.
\bibitem{SA1} Savin, Ovidiu. \textit{Pointwise $C^{2,\alpha}$  estimates at the boundary for the Monge-Amp\`{e}re equation.} J. Amer. Math. Soc. 26 (2013), no. 1, 63–99.
\bibitem{SA2} Savin, Ovidiu. \textit{A localization theorem and boundary regularity for a class of degenerate Monge-Amp\`{e}re equations.} J. Differential Equations 256 (2014), no. 2, 327–388.
\bibitem{Sh-Y1} Shankar, Ravi; Yuan, Yu. \textit{Rigidity for general semiconvex entire solutions to the sigma-2 equation.} Duke Math. J. 171 (2022), no. 15, 3201–3214.
\bibitem{Sh-Y2} Shankar, Ravi; Yuan, Yu. \textit{Hessian estimates for the sigma-2 equation in dimension four.} Ann. of Math. (2) 201 (2025), no. 2, 489–513.
\bibitem{Wang-Y} Wang, Dake; Yuan, Yu. \textit{Hessian estimates for special Lagrangian equations with critical and supercritical phases in general dimensions.} Amer. J. Math. 136 (2014), no. 2, 481–499.
\bibitem{Warren} Warren, Micah. \textit{Nonpolynomial entire solutions to $\sigma^k$ equations.} Comm. Partial Differential Equations 41 (2016), no. 5, 848–853.
\bibitem{Warren-Y1} Warren, Micah; Yuan, Yu. \textit{Hessian estimates for the sigma-2 equation in dimension 3.} Comm. Pure Appl. Math. 62 (2009), no. 3, 305–321.
\bibitem{Warren-Y2} Warren, Micah; Yuan, Yu. \textit{Explicit gradient estimates for minimal Lagrangian surfaces of dimension two.} Math. Z. 262 (2009), no. 4, 867–879.
\bibitem{Y1} Yuan, Yu. \textit{A Bernstein problem for special Lagrangian equations.} Invent. Math. 150 (2002), no. 1, 117–125.
\bibitem{Y2} Yuan, Yu. \textit{Global solutions to special Lagrangian equations.} Proc. Amer. Math. Soc. 134 (2006), no. 5, 1355–1358.
\bibitem{ZH} Zhou, Ziwei. \textit{A Liouville theorem of the 2-Hessian equation in half-space.} J. Math. Anal. Appl. 528 (2023), no. 2, Paper No. 127563, 16 pp.
\end{thebibliography}

\end{document}